\newtheorem{thrm}{Theorem}[section]
\newtheorem{lem}[thrm]{Lemma}
\newtheorem{cor}[thrm]{Corollary}
\theoremstyle{definition}
\newtheorem{definition}[thrm]{Definition}
\newtheorem{defin}[thrm]{Definition}
\numberwithin{equation}{section}
\newcommand{\labeq}[1]{\label{eq:#1}}
\newcommand{\refeq}[1]{(\ref{eq:#1})}
\newcommand{\labt}[1]{\label{thm:#1}}
\newcommand{\reft}[1]{Theorem~\ref{thm:#1}}
\newcommand{\labl}[1]{\label{lemma:#1}}
\newcommand{\refl}[1]{Lemma~\ref{lemma:#1}}
\newcommand{\labd}[1]{\label{definition:#1}}
\newcommand{\refd}[1]{Definition~\ref{definition:#1}}
\newcommand{\labc}[1]{\label{coro:#1}}
\newcommand{\labf}[1]{\label{fig:#1}}
\newcommand{\reff}[1]{Figure~\ref{fig:#1}}
\newcommand{\dimh}[1]{\hbox{$\dim_{\hbox{H}}$}\left( #1\right)}
\newcommand{\q}[1]{q_1 \cdots q_{ #1 }}
\newcommand{\NN}{\mathbb{N}_2^{\mathbb{N}}}
\newcommand{\UPAmri}{\Upsilon_{Q,A_{m,r}}^{-1}}
\newcommand{\LAAmr}{\Lambda_{A_{m,r}}(Q)}
\newcommand{\wrt}[1]{\hbox{ w.r.t. }#1}
\newcommand{\ppq}{\psi_{P,Q}}
\newcommand{\floor}[1]{\left\lfloor #1 \right\rfloor} 
\newcommand{\br}[1]{\left\{ #1 \right\}}
\newcommand{\pr}[1]{\left( #1 \right)}
\newcommand{\bra}[1]{\left [ #1 \right ]}
\newcommand{\NQ}{\mathscr{N}(Q)}
\newcommand{\N}[1]{\mathscr{N}( #1 )}
\newcommand{\Nk}[2]{\mathscr{N}_{#2}( #1 )} 
\newcommand{\DNQ}{\mathscr{DN}(Q)}
\newcommand{\RNQ}{\mathscr{RN}(Q)}
\newcommand{\RNk}[2]{\mathscr{RN}_{#2}( #1 )} 
\newcommand{\RDN}{\RNQ \cap \DNQ \backslash \NQ}
\newcommand{\DistQ}{\mathscr{D}_{(x_n)}(Q)}
\newcommand{\qnk}{Q_n^{(k)}}
\newcommand{\pnk}{P_n^{(k)}}
\newcommand{\QnmrI}{Q_n^{(m,r)}}
\newcommand{\NAPIQ}{\mathscr{N}^{I}(Q)}
\newcommand{\NAPIIQ}{\mathscr{N}^{II}(Q)}
\newcommand{\NAPI}[1]{\mathscr{N}^{I}( #1 )}
\newcommand{\NAPII}[1]{\mathscr{N}^{II}( #1 )}
\newcommand{\NAPIk}[2]{\mathscr{N}^{I}_{ #2 }( #1 )}
\newcommand{\NAPIIkmr}[4]{\mathscr{N}^{II}_{{ #2 }, { #3 }, { #4 } }( #1 )}
\newcommand{\NAPImr}[3]{\mathscr{N}^{I}_{{#2}, {#3}}(#1)}
\newcommand{\NAPAbI}[1]{\mathscr{N}^{I}_{Ab}(#1)}
\newcommand{\NAPAbII}[1]{\mathscr{N}^{II}_{Ab}(#1)}
\newcommand{\RNAPIQ}{\mathscr{RN}^{I}(Q)}
\newcommand{\RNAPIIQ}{\mathscr{RN}^{II}(Q)}
\newcommand{\RNAPI}[1]{\mathscr{RN}^{I}( #1 )}
\newcommand{\RNAPII}[1]{\mathscr{RN}^{II}( #1 )}
\newcommand{\one}{\mathbbm{1}}
\newcommand{\R}{\mathbb{R}}
\newcommand{\oneNSk}[1]{\one_{S}(#1)}
\newcommand{\DNAb}{\mathscr{DN}_{Ab}(Q)}
\newcommand{\blank}[1]{ }
\begin{document}

\thanks{Research of the authors was partially supported by the U.S. NSF grant DMS-0943870.}

\title{Normality of different orders for Cantor series expansions}
\author[D. Airey]{Dylan Airey}
\address[D. Airey]{
Department of Mathematics, University of Texas at Austin, 2515 Speedway, Austin, TX 78712-1202, USA}
\email{dylan.airey@utexas.edu}

\author[B. Mance]{Bill Mance}
\address[B. Mance]{Institute of Mathematics of Polish Academy of Science, \'{S}niadeckich 8, 00-656 Warsaw, Poland}
\address{Department of Mathematics, University of North Texas, General Academics Building 435, 1155 Union Circle,  \#311430, Denton, TX 76203-5017, USA}

\email{Bill.A.Mance@gmail.com}

\maketitle


\begin{abstract}
Let $S \subseteq \mathbb{N}$ have the property that for each $k \in S$ the set $(S - k) \cap \mathbb{N} \setminus S$ has asymptotic density $0$. We prove that there exists a basic sequence $Q$ where the set of numbers $Q$-normal of all orders in $S$ but not $Q$-normal of all orders not in $S$ has full Hausdorff dimension. If the function $k \mapsto  \one_S(k)$ is computable, then there exist computable examples. For example, there exists a computable basic sequence $Q$ where the set of numbers normal of all even orders and not normal of all odd orders has full Hausdorff dimension. This is in strong constrast to the $b$-ary expansions where any real number that is normal of order $k$ must also be normal of all orders between $1$ and $k-1$.

Additionally, all numbers we construct satisfy the unusual condition that block frequencies sampled along non-trivial arithmetic progressions don't converge to the expected value. This is also in strong contrast to the case of the $b$-ary expansions, but more similar to the case of the continued fraction expansion. As a corollary, the set of $Q$-normal numbers that are not normal when sampled along any non-trivial arithmetic progression has full Hausdorff dimension.
\end{abstract}

\section{Introduction}

\subsection{Normal numbers}

We recall the modern definition of a normal number.

\begin{defin}\labd{normal}
A real number $x$ is {\it normal of order $k$ in base $b$} if all blocks of digits of length $k$ in base $b$ occur with relative frequency $b^{-k}$ in the $b$-ary expansion of $x$. We denote this set by $\Nk{b}{k}$.
Moreover, $x$ is {\it simply normal in base $b$} if it is a member of $\Nk{b}{1}$ and $x$ is {\it normal in base $b$} if it is normal of order $k$ in base $b$ for all natural numbers $k$. We denote the set of normal numbers in base $b$ by 
$$
\N{b}:=\bigcap_{k \in \mathbb{N}} \Nk{b}{k}.
$$
\end{defin}

We also wish to mention one of the most fundamental and important results relating to normal numbers in base $b$. The following is due to D. D. Wall in his Ph.D. dissertation \cite{Wall}.

\begin{thrm}[D. D. Wall]\labt{wall}
A real number $x$ is normal in base $b$ if and only if the sequence $(b^nx)$ is uniformly distributed mod $1$.
\end{thrm}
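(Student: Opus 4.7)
The plan is to exploit the elementary bijection between digit blocks in the $b$-ary expansion of $x$ and the orbit of $x$ under the shift $y \mapsto by \bmod 1$. Specifically, if $x = 0.x_1 x_2 \ldots$ in base $b$, then $b^n x \bmod 1 = 0.x_{n+1} x_{n+2} \ldots$, so the event $x_{n+1} \cdots x_{n+k} = d_1 \cdots d_k$ is equivalent (modulo $b$-adic rational boundary cases) to $b^n x \bmod 1 \in I_{d_1 \cdots d_k}$, where $I_{d_1 \cdots d_k}$ is the half-open $b$-adic interval of length $b^{-k}$ consisting of numbers whose $b$-ary expansion starts with $d_1 \cdots d_k$. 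I would first reduce to the case where $x$ is not a $b$-adic rational: the excluded set is countable, each excluded $x$ has an eventually periodic expansion (hence is not normal) and an orbit that is eventually equal to $0$ (hence not equidistributed mod $1$), so the equivalence holds trivially on the exceptional set.

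For the implication $(\Leftarrow)$, if $(b^n x)$ is equidistributed mod $1$, then applying the definition of equidistribution to each interval $I_B$ immediately yields asymptotic frequency $|I_B| = b^{-k}$ for each block $B$ of length $k$, which is precisely the assertion that $x \in \N{b}$.

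For the converse $(\Rightarrow)$, normality of order $m$ gives, via the same correspondence, asymptotic density $b^{-m}$ to $\{n : b^n x \bmod 1 \in I\}$ for every $b$-adic interval $I$ of length $b^{-m}$. To extend this to arbitrary $[a, c) \subseteq [0,1)$ I would use a standard inner/outer sandwich: partition $[0,1)$ into the $b^m$ intervals $I_j = [j b^{-m}, (j+1) b^{-m})$, set $U_m^- = \bigcup\{I_j : I_j \subseteq [a,c)\}$ and $U_m^+ = \bigcup\{I_j : I_j \cap [a,c) \neq \emptyset\}$, note $|U_m^+| - |U_m^-| \le 2 b^{-m}$, apply finite additivity of asymptotic density to both unions, and let $m \to \infty$ to obtain
$$
\lim_{N \to \infty} \frac{1}{N} \#\{1 \le n \le N : b^n x \bmod 1 \in [a,c)\} = c - a.
$$
Since $[a,c)$ was arbitrary, $(b^n x)$ is equidistributed mod $1$.

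The main (and essentially only) obstacle is the pedantic boundary point issue: the correspondence between block occurrences and membership in a half-open $b$-adic interval can fail when $b^n x$ coincides with an endpoint, which is exactly why excluding $b$-adic rationals at the start makes the argument clean. No Weyl-type Fourier input is needed; the proof rests entirely on the combinatorial digit/interval correspondence together with a routine approximation.
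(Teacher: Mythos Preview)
Your argument is correct and is essentially the standard proof of Wall's theorem: the digit--interval correspondence gives the backward direction immediately, and the forward direction follows from the fact that $b$-adic intervals generate the algebra of intervals up to arbitrarily small error. The handling of $b$-adic rationals as a trivial excluded case is also fine.

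However, note that the paper does not actually prove this statement. It is quoted as a classical result of D.~D.~Wall \cite{Wall} and is used as background motivation for the definitions of $Q$-normality and $Q$-distribution normality that follow. So there is no ``paper's own proof'' to compare against; your write-up simply supplies a proof where the paper is content to cite one.
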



It is well known that \'{E}. Borel \cite{BorelNormal} was the first mathematician to study normal numbers. In 1909 he gave the following definition.

\begin{defin}[\'{E}. Borel]\labd{normalBorel}
A real number $x$ is {\it normal in base $b$} if each of the numbers $x, bx, b^2x,\cdots$ is simply normal (in the sense of \refd{normal}), in each of the bases $b, b^2, b^3, \cdots$.
\end{defin}

\'{E}. Borel proved that Lebesgue almost every real number is normal, in the sense of \refd{normalBorel}, in all bases. In 1940, S. S. Pillai \cite{Pillai2} simplified \refd{normalBorel} by proving that
\begin{thrm}[S. S. Pillai]\labt{Pillai}
For $b \geq 2$, a real number $x$ is normal in base $b$ if and only if it is simply normal in each of the bases $b, b^2, b^3, \cdots$.
\end{thrm}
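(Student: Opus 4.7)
The plan is to prove both directions. The $(\Rightarrow)$ implication is immediate from \refd{normalBorel}: if $x$ is normal in base $b$ in Borel's sense, then in particular $x$ itself, being the first entry of the list $x, bx, b^2 x, \ldots$, is simply normal in each of $b, b^2, b^3, \ldots$, which is the conclusion of Pillai's theorem.

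For the $(\Leftarrow)$ implication, assume that $x$ is simply normal in each base $b^k$. I will verify Borel's condition by showing that for every $i \ge 0$ and every $m \ge 1$ the real number $b^i x$ is simply normal in base $b^m$. Writing $(a_n)_{n \ge 1}$ for the base-$b$ digits of $x$, this reduces to proving that for each word $w \in \{0, 1, \ldots, b-1\}^m$ the set of indices $n \equiv i+1 \pmod m$ at which $(a_n, a_{n+1}, \ldots, a_{n+m-1}) = w$ has density $b^{-m}$ inside the arithmetic progression $\{n : n \equiv i+1 \pmod m\}$.

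The core of the argument exploits simple normality of $x$ in the larger base $b^{mM}$ for an arbitrary integer $M \ge 1$. By hypothesis each of the $b^{mM}$ possible aligned length-$mM$ blocks occurs in $(a_n)$ with limiting frequency $b^{-mM}$. Within a typical aligned $mM$-block, among the $\approx M$ aligned $m$-subblocks whose starting position has a prescribed residue modulo $m$, a uniform counting argument shows that a proportion $b^{-m}$ coincide with $w$ on average. Summing over the first $K$ aligned $mM$-blocks gives a total of $K(M - O(1)) b^{-m}$ occurrences of $w$ at positions $\equiv i+1 \pmod m$ lying entirely inside some aligned $mM$-block. Dividing by the $\approx KM$ eligible starting positions, then letting first $K \to \infty$ (using simple normality in base $b^{mM}$) and subsequently $M \to \infty$, yields the target density $b^{-m}$.

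The principal obstacle is controlling the $w$-occurrences that straddle the boundary between two consecutive aligned $mM$-blocks, since simple normality in base $b^{mM}$ only directly controls occurrences internal to each block. However, at most one starting position per boundary gives rise to a straddling occurrence, so such terms contribute only $O(K)$ to both the numerator and the denominator of order $KM$, and their effect is swamped upon sending $M \to \infty$. Combining this boundary estimate with the within-block count of the previous paragraph establishes the required density $b^{-m}$ at every offset $i$ for every word $w$, verifying Borel's definition and completing the equivalence.
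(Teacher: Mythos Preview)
The paper does not supply its own proof of \reft{Pillai}; it is quoted as a classical result with a citation to Pillai's 1940 paper, and the surrounding text merely notes that shorter proofs were later given by Maxfield and by Cassels. So there is no in-paper argument to compare against.

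Your proposal is correct and is essentially the classical combinatorial route. The $(\Rightarrow)$ direction is indeed immediate from \refd{normalBorel}. For $(\Leftarrow)$, your idea of invoking simple normality of $x$ in the auxiliary base $b^{mM}$ is exactly the standard device: equidistribution of the aligned length-$mM$ blocks lets you average the bounded function ``number of internal occurrences of $w$ at the prescribed residue class'' over all $b^{mM}$ patterns, and linearity gives the value $(M-O(1))b^{-m}$. The boundary estimate---at most one straddling start per $mM$-block when the residue is not $1$---is correct and supplies the missing $O(K)$ term.

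One small point worth making explicit when you write this up: for fixed $M$ the ratio need not have a limit as $K\to\infty$, because the straddling occurrences are not governed by simple normality in base $b^{mM}$. What you actually obtain is
\[
\frac{M-1}{M}\,b^{-m}\ \le\ \liminf_{N\to\infty}\ \le\ \limsup_{N\to\infty}\ \le\ \frac{(M-1)b^{-m}+1}{M},
\]
and then sending $M\to\infty$ pinches both to $b^{-m}$. This is implicit in your phrase ``letting first $K\to\infty$ and subsequently $M\to\infty$,'' but the sandwich should be stated, since otherwise the reader may wonder why the double limit is legitimate.
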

\reft{Pillai} was improved in 1951 by I. Niven and H. S. Zuckerman \cite{NivenZuckerman} who proved
\begin{thrm}[I. Niven and H. S. Zuckerman]\labt{NivenZuckerman}
\refd{normal} and \refd{normalBorel} are equivalent.
\end{thrm}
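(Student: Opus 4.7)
The plan is to establish both implications between \refd{normal} and \refd{normalBorel}. The easy direction, \refd{normalBorel} $\Rightarrow$ \refd{normal}, is a residue-class counting argument: fix $k \geq 1$ and a length-$k$ block $w$ in base $b$; for each $r \in \{0, 1, \ldots, k-1\}$, \refd{normalBorel} supplies that $b^r x$ is simply normal in base $b^k$, which is equivalent to saying that among the first $\approx N/k$ positions of the base-$b$ expansion of $x$ lying in residue class $r + 1 \pmod k$, the block $w$ occurs with asymptotic relative frequency $b^{-k}$. Summing the counts over all $k$ residue classes yields $Nb^{-k} + o(N)$ total occurrences of $w$ among the first $N$ positions, which is normality of order $k$ for $x$; as $k$ was arbitrary, \refd{normal} follows.

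For the converse direction, by Pillai's theorem (\reft{Pillai}) it suffices to show that \refd{normal} for $x$ implies $x$ is simply normal in each base $b^k$, $k \geq 1$. Fix $k$ and a length-$k$ block $w$, let $M = mk$ with $m$ a large parameter, and let $C_1(N, w)$ count occurrences of $w$ at positions $\equiv 1 \pmod k$ in the first $N$ base-$b$ digits of $x$. For $u \in \{0, \ldots, b-1\}^M$, set $\phi_w(u)$ equal to the fraction of the $m$ aligned length-$k$ sub-blocks of $u$ (at relative positions $1, k+1, \ldots, (m-1)k + 1$) that equal $w$; under the uniform measure on $\{0, \ldots, b-1\}^M$, $\phi_w$ has mean $b^{-k}$ and variance $O(1/m)$, being a normalized sum of $m$ independent Bernoulli$(b^{-k})$ indicators, so Chebyshev's inequality forces all but an $O(1/(m \e^2))$ fraction of words $u$ to satisfy $|\phi_w(u) - b^{-k}| \leq \e$.

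Setting $u_i := x_i x_{i+1} \cdots x_{i+M-1}$, \refd{normal} of order $M$ forces $\#\{i \leq N - M + 1 : u_i = u\} = Nb^{-M} + o(N)$ for each fixed $u$, whence summing over the $O(b^M/(m\e^2))$ bad words gives a total of at most $O(N/(m\e^2))$ indices $i$ with $|\phi_w(u_i) - b^{-k}| > \e$ (once $N$ is taken sufficiently large relative to $M$). In particular, only an $O(k/(m\e^2))$ fraction of the $\approx N/k$ aligned indices $i \equiv 1 \pmod k$ is bad. Swapping the order of summation in
$$\sum_{i \equiv 1 \pmod k} \phi_w(u_i) = \frac{1}{m} \sum_{i \equiv 1 \pmod k} \sum_{j = 0}^{m-1} \one[x_{i+jk} \cdots x_{i + jk + k - 1} = w]$$
produces $C_1(N, w) + O(M)$, so the aligned empirical mean of $\phi_w$ equals $C_1(N, w)/(N/k) + O(Mk/N)$. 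Combining this identity with the concentration estimate, sending $N \to \infty$ with $m$ fixed and then $m \to \infty$, yields $C_1(N, w)/(N/k) \to b^{-k}$. The main obstacle is the passage from a hypothesis about all starting positions to a conclusion about the density-$1/k$ subset of aligned positions; the resolution is to exploit \refd{normal} at arbitrarily high order $M \gg k$, so that the concentration of $\phi_w$ on typical length-$M$ windows is strong enough to survive restricting to a density-$1/k$ subset of positions.
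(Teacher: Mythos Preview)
The paper does not supply its own proof of \reft{NivenZuckerman}; the result is quoted from \cite{NivenZuckerman} as historical background, with a pointer to Cassels' shorter argument \cite{CasselsNZ}. There is therefore nothing in the paper to compare your proposal against.

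That said, your argument is essentially the standard textbook proof (close in spirit to Cassels). The easy direction is correct as written. In the hard direction your reduction via \reft{Pillai} is legitimate, since Pillai's result predates and is independent of Niven--Zuckerman, and the direction you invoke (simple normality in every base $b^k$ implies Borel normality) is the trivial one anyway. The concentration-of-$\phi_w$ argument is the right idea and is carried out correctly; the key point, which you identify, is that the total number of bad starting positions is $O(N/(m\e^2))$, and this crude bound already suffices when compared against the $N/k$ aligned positions. One minor looseness: you introduce $\e$ but then only send $N \to \infty$ and $m \to \infty$; to conclude you must also let $\e \to 0$, for instance by coupling $\e = m^{-1/4}$ so that both $\e$ and the bad-fraction bound $k/(m\e^2) = k m^{-1/2}$ vanish together as $m \to \infty$. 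With that small adjustment the proof is complete.
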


A simpler proof of \reft{Pillai} was given by J. E. Maxfield in \cite{MaxfieldPillai}. J. W. S. Cassels gave a shorter proof of \reft{NivenZuckerman} in \cite{CasselsNZ}.
 It should be noted that both of these results require some work to establish, but were assumed without proof by several authors. For example, M. W. Sierpinski assumed \reft{Pillai} in \cite{Sierpinski} without proof. Moreover, D. G. Champernowne \cite{Champernowne}, A. H. Copeland and P. Erd\H os \cite{CopeErd}, and other authors took \refd{normal} as the definition of a normal number before it was proven that \refd{normal} and \refd{normalBorel} are equivalent. More information can be found in Chapter 4 of the book of Y. Bugeaud \cite{BugeaudBook}.

The following theorem was proven by H. Furstenberg in his seminal paper ``Disjointness in Ergodic Theory, Minimal Sets, and a Problem in Diophantine Approximation'' \cite{FurstenbergDisjoint} on page 23 as an application of disjointness to stochastic sequences.
\begin{thrm}[H. Furstenberg]\labt{basebnormalii}
Suppose that $x=d_0.d_1d_2\cdots$ is the $b$-ary expansion of $x$. Then $x$ is normal in base $b$ if and only if for all natural numbers $m$ and $r$ the real number $0.d_{r}d_{m+r}d_{2m+r}d_{3m+r}\cdots$ is normal in base $b$.
\end{thrm}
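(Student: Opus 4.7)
My plan is to handle the two directions separately. The reverse implication is essentially trivial: specializing to $m = r = 1$ gives that the number $0.d_1 d_2 d_3 \cdots$, i.e., the fractional part of $x$, is normal in base $b$, and since normality of $x$ in base $b$ depends only on its $b$-ary digit sequence from position $1$ onward, this is the same as $x$ being normal in base $b$.

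For the forward direction, the plan is to first reduce to the auxiliary claim that if $x$ is normal in base $b$, then $x$ is also normal in base $b^m$ for every $m \geq 1$. This follows from Pillai's Theorem (\reft{Pillai}) applied twice: $x$ is normal in base $b$ iff $x$ is simply normal in base $b^j$ for every $j \geq 1$; in particular $x$ is simply normal in base $(b^m)^k = b^{mk}$ for every $k \geq 1$, and applying Pillai's Theorem in the reverse direction to the base $b^m$ yields that $x$ is normal in base $b^m$.

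Given this reduction, fix $m$ and $r$, write $r = am + r'$ with $r' \in \{1, \ldots, m\}$ and $a \geq 0$, and consider the base-$b^m$ expansion $\{x\} = 0.D_1 D_2 D_3 \cdots$, where each base-$b^m$ digit $D_i$ encodes the $m$-tuple of consecutive base-$b$ digits $(d_{(i-1)m+1}, \ldots, d_{im})$. Since $d_{r+jm} = d_{(a+j)m + r'}$ is precisely the $r'$-th base-$b$ coordinate of $D_{a+j+1}$, the $b$-ary digit sequence of $y := 0.d_r d_{m+r} d_{2m+r}\cdots$ coincides, up to a harmless shift by $a$ positions, with the sequence of $r'$-th base-$b$ coordinates of $D_1, D_2, D_3, \ldots$. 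To finish, I count block frequencies: for any target block $c_1 c_2 \cdots c_k$ of length $k$ in base $b$, its frequency in the digit sequence of $y$ equals the frequency in $D_1 D_2 D_3 \cdots$ of $k$-blocks $E_1 E_2 \cdots E_k$ whose $r'$-th base-$b$ coordinates are $c_1, c_2, \ldots, c_k$; there are exactly $(b^{m-1})^k$ such base-$b^m$ blocks, each of frequency $b^{-mk}$ by normality of $x$ in base $b^m$, yielding total frequency $(b^{m-1})^k \cdot b^{-mk} = b^{-k}$, which is exactly the normality condition for $y$.

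The main obstacle is the auxiliary reduction step, since block-frequency normality in base $b$ does not obviously force equal distribution of blocks across the $m$ residue classes modulo $m$, which is what base-$b^m$ normality, and equivalently the AP-sampling statement, both require. Furstenberg's original argument bypasses this via his disjointness theory: the rotation on $\mathbb{Z}/m\mathbb{Z}$ has zero entropy and is therefore disjoint from the Bernoulli shift underlying base-$b$ expansions, and this disjointness implies that empirical averages of the digit sequence along arithmetic progressions converge to the same limits as over all integers. More elementary combinatorial and Fourier-analytic proofs of the same reduction are given in the works of Pillai, Maxfield, and Cassels noted earlier in the text.
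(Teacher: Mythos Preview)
The paper does not prove \reft{basebnormalii}; it merely quotes the result and attributes it to Furstenberg's disjointness paper. So there is no proof in the paper to compare against directly, only the remark that Furstenberg obtained it ``as an application of disjointness to stochastic sequences.''

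Your argument is correct and is a genuinely different route from Furstenberg's. You reduce the forward direction to the implication ``normal in base $b$ $\Rightarrow$ normal in base $b^m$,'' which you obtain cleanly from Pillai's theorem (\reft{Pillai}) together with Niven--Zuckerman (\reft{NivenZuckerman}), both already available in the paper. The counting step that follows---identifying the AP-sampled digit sequence with a coordinate projection of the base-$b^m$ digit sequence and summing over the $(b^{m-1})^k$ compatible blocks---is straightforward and correct. Furstenberg's original proof instead invokes the disjointness of zero-entropy systems (here the rotation on $\mathbb{Z}/m\mathbb{Z}$) from Bernoulli shifts, which yields the residue-class equidistribution directly without passing through Pillai. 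Your approach is more elementary and self-contained given the background theorems the paper already cites; Furstenberg's approach is conceptually deeper and generalizes to settings where no analogue of Pillai's theorem is available. Your final paragraph reads as though the reduction step is still outstanding, but in fact you have already discharged it via \reft{Pillai}; you might rephrase that paragraph as commentary rather than as a residual obstacle.
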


It is interesting to note that although Furstenberg did not provide an alternate
proof of \reft{NivenZuckerman}, he showed that an entirely different definition of
normality is equivalent to \refd{normal}. 
We will say that $x$ is \textit{AP normal of type I in base $b$} if $x$ satisfies \refd{normalBorel} and \textit{AP normal of type II in base $b$} if $x$ satisfies the notion introduced in \reft{basebnormalii}. Thus, for numbers expressed in base $b$
\begin{equation}\labeq{APequiv}
\hbox{normality }\Leftrightarrow \hbox{ AP normality of type I } \Leftrightarrow \hbox{ AP normality of type II}.
\end{equation}
The core of \'{E}. Borel's definition is that a number is normal in base $b$ if blocks of digits occur with the desired relative frequency along all infinite arithmetic progressions. Similarly, the core of H. Furstenberg's definition deals with testing for blocks in a different way along arithmetic progressions. 

The authors feel that the equivalence of \refd{normal} and \refd{normalBorel} and other similar ones is a far more delicate topic than is typically assumed. Recent papers \cite{ppq2} and \cite{AireyMance1} establish that the connection between analagous notions is weaker and more subtle for Cantor series expansions. Furthermore, the situation is far worse for the continued fraction expansion. Let $[a_1,a_2,a_3,\ldots]$ be normal with respect to the continued fraction expansion. B. Heersink and J. Vandehey \cite{HeersinkVandehey} recently proved that for any integers $m \geq 2, k \geq 1$, the continued fraction $[a_k,a_{m+k},a_{2m+k},a_{3m+k},\ldots]$ is never normal with respect to the continued fraction expansion. One of the main goals of this paper will be to greatly strengthen the results in \cite{ppq2}.


The relationship between numbers normal of order $r$ and $s$ is straightforward. It is easy to show that if a real number $x$ is normal of order $r>s$ in base $b$, then it must also be normal of order $s$ in base $b$. Thus, for all $k$
$$
\Nk{b}{k} \subseteq \bigcap_{j \in \{1,\ldots,k-1\}} \Nk{b}{j}.
$$
We remark that the related concept of simple normality in base $b^k$ has been studied, but is different than normality of order $k$. Y. Bugeaud proved the following theorem in \cite{BugeaudSimple}.
\begin{thrm}\labt{BugeaudSimple}
Let $a$ and $c$ be integers $\geq 2$ such that $a$ is not an integer power of $c$. Then the set of real numbers which are simply normal in base $a$ but not base $c$ has full Hausdorff dimension.
\end{thrm}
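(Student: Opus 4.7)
My plan is to construct, for each $\epsilon>0$, a Borel probability measure $\mu_\epsilon$ on $[0,1]$ with $\dimh{\mathrm{supp}\,\mu_\epsilon}\geq 1-\epsilon$ whose generic point is simply normal in base $a$ but not in base $c$. A countable union of such sets as $\epsilon\to 0^+$ then produces a subset of full Hausdorff dimension inside the set described in the theorem.

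I would split into two cases according to the arithmetic of $(a,c)$. \emph{Case I}: $\log a/\log c\notin\mathbb{Q}$, so $a$ and $c$ are multiplicatively independent. Take $\mu_\epsilon$ to be the Bernoulli measure on base-$c$ expansions with digit distribution $p=(p_0,\dots,p_{c-1})$ chosen close to uniform but with $p_0\neq 1/c$. By the law of large numbers, $\mu_\epsilon$-a.e.\ $x$ has base-$c$ digit frequency equal to $p$, and hence fails simple normality in base $c$. By a classical ergodic-theoretic result on $\times c$-invariant measures in multiplicatively independent bases (due to Schmidt, Host, and others), $\mu_\epsilon$-a.e.\ $x$ is normal in base $a$, and in particular simply normal. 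Billingsley's formula gives $\dimh{\mathrm{supp}\,\mu_\epsilon}=H(p)/\log c$, which can be made to exceed $1-\epsilon$ by taking $p$ close enough to uniform.

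\emph{Case II}: $a$ and $c$ are multiplicatively dependent. Then $a=b^m$, $c=b^n$ for integers $b\geq 2$, $m,n\geq 1$, and the hypothesis that $a$ is not a positive integer power of $c$ forces $n\nmid m$. Since simple normality in base $a$ (respectively $c$) is equivalent to uniform frequency of $m$-blocks (respectively $n$-blocks) in the base-$b$ expansion, I would work in base $b$. Pick a large multiple $L$ of $\lcm(m,n)$, and build $\mu_\epsilon$ as an i.i.d.\ product measure over consecutive length-$L$ blocks of base-$b$ digits, each drawn uniformly from a subset $D\subset\{0,\dots,b-1\}^L$ satisfying (a) when summed over $D$, every $m$-subblock appears the same total number of times, and (b) when summed over $D$, some $n$-subblock appears strictly more often than another. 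Condition (a) forces simple normality in base $a$ via the law of large numbers, and (b) forces its failure in base $c$. Existence of $D$ with $|D|/b^L$ arbitrarily close to $1$ follows from a short linear-algebra argument exploiting $n\nmid m$: one exhibits two explicit $L$-blocks with identical $m$-subblock statistics but differing $n$-subblock statistics, showing that the uniform-$m$-block constraint does not force uniform-$n$-block counts.

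The principal obstacle is the quantitative construction of $D$ in Case II: showing that the set of allowable $L$-blocks is dense enough in $\{0,\dots,b-1\}^L$ to yield a Moran-type Cantor set of dimension approaching $1$, while simultaneously maintaining a fixed base-$c$ bias in the almost-sure limit. Once this combinatorial input is in hand, the Hausdorff dimension of $\mathrm{supp}\,\mu_\epsilon$ follows from the standard self-similar/Bernoulli formula $\log|D|/(L\log b)$. Case I is essentially a black-box application of the Schmidt--Host-type normality theorem combined with Billingsley's dimension formula, so the real work is concentrated in Case II.
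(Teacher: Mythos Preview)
The paper does not prove this statement at all: \reft{BugeaudSimple} is quoted from Bugeaud's paper \cite{BugeaudSimple} as background, with the attribution ``Y.~Bugeaud proved the following theorem in \cite{BugeaudSimple}'', and no argument for it appears anywhere in the present paper. So there is no ``paper's own proof'' to compare your proposal against; if you want a benchmark you must look at Bugeaud's original article.

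On the substance of your sketch: Case~I is sound as stated --- a Bernoulli measure on base-$c$ digits is $\times c$-invariant and ergodic with positive entropy, so Host's theorem gives base-$a$ normality almost surely when $\log a/\log c\notin\mathbb{Q}$, and Billingsley's formula handles the dimension. In Case~II your reduction is correct (writing $a=b^m$, $c=b^n$, the hypothesis forces $n\nmid m$), but be careful that ``uniform frequency of $m$-blocks'' must mean \emph{aligned} (non-overlapping) $m$-blocks at positions $1,m+1,2m+1,\ldots$, not all $m$-subwords; your conditions (a) and (b) on $D$ should be phrased accordingly. The existence of two $L$-blocks with identical aligned-$m$-block multiset but distinct aligned-$n$-block multiset is easy (e.g.\ for $b=2$, $m=2$, $n=3$, $L=6$: compare $001100$ with $110000$), and from such a pair one can perturb the uniform measure on $\{0,\dots,b-1\}^L$ rather than carve out a subset $D$; the Billingsley/entropy dimension formula for the resulting i.i.d.\ block measure then gives dimension arbitrarily close to $1$. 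That route is cleaner than trying to realise everything with a uniform measure on a large set $D$, which is where your sketch is vaguest.
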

Taking $a = b^k$ and $c = b^{k+1}$ shows $\dim_H(\Nk{b^k}{1} \setminus \Nk{b^{k+1}}{1}) = 1$. 
We will be interested in difference sets of numbers normal of different orders for the Cantor series expansions. Thus, we record what is known for $b$-ary expansions. The notes in section 1.8 of \cite{KuN} describes the results of many papers on this subject. However, the authors are unaware of a version of \reft{BugeaudSimple} for normality of order $k$. Thus, we will later prove as a consequence of a theorem of C. Colebrook \cite{Colebrook} that
\begin{thrm}\labt{diffbary}
The set of real numbers which are normal of order $k$ but not order $k+1$ in base $b$ has full Hausdorff dimension. That is 
$$
\dimh{\Nk{b}{k} \backslash \Nk{b}{k+1}}=1.
$$
\end{thrm}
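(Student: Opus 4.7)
The plan is to exhibit, for each $\varepsilon>0$, a subset of $\Nk{b}{k}\setminus\Nk{b}{k+1}$ of Hausdorff dimension at least $1-\varepsilon$; a countable union then forces the dimension of the difference set up to $1$. The tool is Colebrook's theorem, which computes the Hausdorff dimension of the set of $x\in[0,1]$ whose base-$b$ expansion realizes a prescribed, shift-invariantly consistent distribution on blocks of length $k+1$: the dimension equals $h(\mu)/\log b$, where $\mu$ is the associated stationary $k$-step Markov measure and $h(\mu)$ is its Kolmogorov--Sinai entropy.

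First I would parametrize a family of probability vectors $(p_w^{(t)})_{|w|=k+1}$ satisfying both the left and right uniform marginal conditions
$$
\sum_{j=0}^{b-1}p_{wj}^{(t)}=\sum_{j=0}^{b-1}p_{jw}^{(t)}=b^{-k}\quad\text{for every }|w|=k,
$$
with $p_w^{(0)}=b^{-(k+1)}$ and $p_w^{(t)}\neq b^{-(k+1)}$ for some $w$ whenever $t\neq 0$. These linear constraints cut out a proper affine subspace of the probability simplex on $b^{k+1}$ letters whose relative interior contains the uniform distribution, so such families exist; a concrete instance for $b=2$, $k=1$ is $(p_{00},p_{01},p_{10},p_{11})=(\tfrac14+t,\tfrac14-t,\tfrac14-t,\tfrac14+t)$. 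The two uniform marginal conditions guarantee that the uniform distribution on $k$-blocks is the stationary distribution of the associated $k$-step Markov chain $\mu_t$, so every $\mu_t$-generic point has $k$-block frequencies equal to $b^{-k}$ and therefore lies in $\Nk{b}{k}$; for $t\neq 0$, that same point has $(k+1)$-block frequencies $p_w^{(t)}$, some of which differ from $b^{-(k+1)}$, so it is not in $\Nk{b}{k+1}$.

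By Colebrook's theorem, the set of $\mu_t$-generic points has Hausdorff dimension
$$
\frac{h(\mu_t)}{\log b}=\frac{H_{k+1}(\mu_t)-H_k(\mu_t)}{\log b}=-\frac{1}{\log b}\sum_{|w|=k+1}p_w^{(t)}\log p_w^{(t)}\;-\;k,
$$
which tends to $1$ as $t\to 0$ by continuity of the entropy function. Taking any sequence $t_n\to 0$ with $t_n\neq 0$ and forming the countable union of the corresponding generic sets produces a subset of $\Nk{b}{k}\setminus\Nk{b}{k+1}$ of Hausdorff dimension $1$. The only real obstacle is confirming that Colebrook's theorem applies in exactly this generality, namely to the fully-supported, irreducible, aperiodic $k$-step Markov measures $\mu_t$; this is essentially the Markov-measure extension of the classical Besicovitch--Eggleston dimension formula, which is precisely what Colebrook established.
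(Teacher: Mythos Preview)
Your proposal is correct and follows essentially the same approach as the paper: both construct a one-parameter family of $k$-step Markov measures with uniform stationary $k$-block marginals but non-uniform $(k+1)$-block marginals, invoke Colebrook's theorem to identify the Hausdorff dimension of the generic set with the normalized entropy, and let the parameter tend to the uniform measure so the entropy tends to $\log b$. The only difference is cosmetic: the paper writes down one explicit perturbation (adjusting the four transitions among $[0^k]$, $[10^{k-1}]$, $[0^{k-1}1]$ by $\pm 1/n$), whereas you describe the whole affine space of admissible perturbations and pick any path through the uniform point.
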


We also mention that the complexity
\footnote{An introduction to descriptive set theory is beyond the scope of this paper. Thus, the interested reader is referred to the book ``Classical descriptive set theory'' by A. Kechris \cite{Kechris}.}
 of such difference sets has recently been studied.
K. Beros considered sets involving normal numbers in the difference heirarchy in \cite{BerosDifferenceSet}. He proved that for $b\geq 2$ and $s>r\geq 1$, the set $\Nk{b}{r} \backslash \Nk{b}{s}$ is $\mathcal{D}_2(\Pi_3^0)$-complete. Additionally, the set $\bigcup_k \Nk{2}{2k+1} \backslash \Nk{2}{2k+2}$ is shown to be $\mathcal{D}_\omega(\Pi_3^0)$-complete.

\subsection{Normality with respect to the Cantor series expansions}

The study of normal numbers and other statistical properties of real numbers with respect to large classes of Cantor series expansions was  first done by P. Erd\H{o}s and A. R\'{e}nyi in \cite{ErdosRenyiConvergent} and \cite{ErdosRenyiFurther} and by A. R\'{e}nyi in \cite{RenyiProbability}, \cite{Renyi}, and \cite{RenyiSurvey} and by P. Tur\'{a}n in \cite{Turan}.

The $Q$-Cantor series expansions, first studied by G. Cantor in \cite{Cantor},
are a natural generalization of the $b$-ary expansions. G. Cantor's motivation to study the Cantor series expansions was to extend the well known proof of the irrationality of the number $e=\sum 1/n!$ to a larger class of numbers. Results along these lines may be found in the monograph of J. Galambos \cite{Galambos}. 
Let $\mathbb{N}_k:=\mathbb{Z} \cap [k,\infty)$. If $Q \in \NN$, then we say that $Q$ is a {\it basic sequence}.
Given a basic sequence $Q=(q_n)_{n=1}^{\infty}$, the {\it $Q$-Cantor series expansion} of a real number $x$  is the (unique)\footnote{Uniqueness can be proven in the same way as for the $b$-ary expansions.} expansion of the form
\begin{equation} \labeq{cseries}
x=E_0+\sum_{n=1}^{\infty} \frac {E_n} {q_1 q_2 \cdots q_n}
\end{equation}
where $E_0=\floor{x}$ and $E_n$ is in $\{0,1,\ldots,q_n-1\}$ for $n\geq 1$ with $E_n \neq q_n-1$ infinitely often. We abbreviate \refeq{cseries} with the notation $x=E_0.E_1E_2E_3\ldots$ w.r.t. $Q$.


Let
$$
Q_n^{(k)}:=\sum_{j=1}^n \frac {1} {q_j q_{j+1} \cdots q_{j+k-1}} \hbox{ and }  T_{Q,n}(x):=\left(\prod_{j=1}^n q_j\right) x \bmod{1}.
$$
A. R\'enyi \cite{Renyi} defined a real number $x$ to be {\it normal} with respect to $Q$ if for all blocks $B$ of length $1$,
\begin{equation}\labeq{rnormal}
\lim_{n \rightarrow \infty} \frac {N_n^Q (B,x)} {Q_n^{(1)}}=1.
\end{equation}
If $q_n=b$ for all $n$ and we restrict $B$ to consist of only digits less than $b$, then \refeq{rnormal} is equivalent to {\it simple normality in base $b$}, but not equivalent to {\it normality in base $b$}. 
A basic sequence $Q$ is {\it $k$-divergent} if
$\lim_{n \rightarrow \infty} Q_n^{(k)}=\infty$,  {\it fully divergent} if $Q$ is $k$-divergent for all $k$, and {\it $k$-convergent} if it is not $k$-divergent. 
A basic sequence $Q$ is {\it infinite in limit} if $q_n \rightarrow \infty$.

Motivated by \reft{wall}, we make the following definitions of normality for Cantor series expansions.

\begin{definition} A real number $x$  is {\it $Q$-normal of order $k$} if for all blocks $B$ of length $k$,
$$
\lim_{n \rightarrow \infty} \frac {N_n^Q (B,x)} {Q_n^{(k)}}=1.
$$
We let $\Nk{Q}{k}$ be the set of numbers that are $Q$-normal of order $k$. The real number $x$ is {\it $Q$-normal} if
$x \in \NQ := \bigcap_{k=1}^{\infty} \Nk{Q}{k}.$
\end{definition}
\begin{definition}
A real number $x$ is {\it $Q$-ratio normal of order $k$} (here we write $x \in \RNk{Q}{k}$) if for all blocks $B_1$ and $B_2$ of length $k$
$$
\lim_{n \to \infty} \frac {N_n^Q (B_1,x)} {N_n^Q (B_2,x)}=1.
$$
We say that $x$ is {\it $Q$-ratio normal} if
$
x \in \RNQ := \bigcap_{k=1}^{\infty} \RNk{Q}{k}.
$
\end{definition}
\begin{definition}
A real number~$x$ is {\it $Q$-distribution normal} if
the sequence $(T_{Q,n}(x))_{n=0}^\infty$ is uniformly distributed mod $1$. Let $\DNQ$ be the set of $Q$-distribution normal numbers.
\end{definition}

We note that by \reft{wall}, the analogous versions of the above definitions are equivalent for the $b$-ary expansions.

It was proven in \cite{ppq1} that the directed graph in \reff{figure1} gives the complete containment relationships between these notions when $Q$ is infinite in limit and fully divergent. The vertices are labeled with all possible intersections of one, two, or three choices of the sets $\NQ$, $\RNQ$, and $\DNQ$, where we know that $\NQ=\NQ \cap \RNQ$ and $\NQ \cap \DNQ=\NQ \cap \DNQ \cap \RNQ$. The set labeled on vertex $A$ is a subset of the set labeled on vertex $B$ if and only if there is a directed path from $A$ to $B$.  For example, $\NQ \cap \DNQ \subseteq \RNQ$, so all numbers that are $Q$-normal and $Q$-distribution normal are also $Q$-ratio normal. 

We remark that all inclusions suggested from \reff{figure1} are either easily proven ($\NQ \subseteq \RNQ$) or are trivial. The difficulty comes in showing a lack of inclusion. The most challenging of these is to prove that there is a basic sequence $Q$ where $\RDN\neq \emptyset$.




\begin{figure}
\caption{}
\labf{figure1}
\begin{tikzpicture}[>=stealth',shorten >=1pt,node distance=3.4cm,on grid,initial/.style    ={}]
  \node[state]          (NQ)                        {$\mathsmaller{\NQ}$};
  \node[state]          (RNQ) [above right =of NQ]    {$\mathsmaller{\RNQ}$};
  \node[state]          (RNQDNQ) [below right=of RNQ]    {$\mathsmaller{\RNQ \cap \DNQ}$};
  \node[state]          (NQDNQ) [below right =of NQ]    {$\mathsmaller{\NQ \cap \DNQ}$};
  \node[state]          (DNQ) [above right=of RNQDNQ]    {$\mathsmaller{\DNQ}$};
\tikzset{mystyle/.style={->,double=black}} 
\tikzset{every node/.style={fill=white}} 
\path (RNQDNQ)     edge [mystyle]    (RNQ)
      (RNQDNQ)     edge [mystyle]     (DNQ)
      (NQ)     edge [mystyle]     (RNQ)
      (NQDNQ)     edge [mystyle]     (RNQDNQ)
      (NQDNQ)     edge [mystyle]     (NQ);
\tikzset{mystyle/.style={<->,double=black}}
\end{tikzpicture}
\end{figure}

It  follows from a well known result of H. Weyl \cite{Weyl2,Weyl4} that $\DNQ$ is a set of full Lebesgue measure for every basic sequence $Q$. We will need the following result of the second author \cite{Mance4} later in this paper.

\begin{thrm}\labt{measure}\footnote{Early work in this direction has been done by A. R\'enyi \cite{Renyi}, T. \u{S}al\'at \cite{Salat4}, and F. Schweiger~\cite{SchweigerCantor}.}
Suppose that $Q$ is infinite in limit. Then $\Nk{Q}{k}$ and $\RNk{Q}{k}$ are of full measure if and only if $Q$ is $k$-divergent. The sets $\NQ$ and $\RNQ$ are of full measure if and only if $Q$ is fully divergent. 
\end{thrm}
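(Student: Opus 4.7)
My plan is to exploit the fact that under Lebesgue measure on $[0,1]$ the digits $E_n(x)$ of the $Q$-Cantor series expansion are independent random variables with $E_n$ uniform on $\{0,1,\ldots,q_n-1\}$; this follows by induction from the nested-interval description of the expansion. For a fixed block $B = b_1 \cdots b_k$, set $X_j(x) = \one\{E_j(x) = b_1,\ldots,E_{j+k-1}(x)=b_k\}$. Then $\mathbb{E}[X_j] = 1/(q_j q_{j+1}\cdots q_{j+k-1})$ and $N_n^Q(B,x) = \sum_{j=1}^n X_j(x)$ has mean exactly $\qnk$. All estimates reduce to controlling sums and variances of these indicators.

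For the forward direction (assume $Q$ is $k$-divergent), I would estimate $\mathrm{Var}(N_n^Q(B,x))$. Because the digits are independent, the indicators $X_i$ and $X_j$ are independent whenever $|i-j|\geq k$, so only pairs with $1\leq |i-j| < k$ contribute a covariance. For such a pair with $j=i+\ell$ one has
$$
\mathrm{Cov}(X_i,X_j) \leq \mathbb{E}[X_i X_j] \leq \mathbb{E}[X_i]\cdot \frac{1}{q_{i+k}\cdots q_{i+k+\ell-1}},
$$
and since $Q$ is infinite in limit the trailing factor tends to zero. Summing over $\ell=1,\ldots,k-1$ and over $i$, together with the diagonal $\sum_j \mathrm{Var}(X_j)\leq \qnk$, this gives $\mathrm{Var}(N_n^Q(B,x)) \leq C\cdot \qnk$ for some constant $C$ (for $n$ sufficiently large). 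Chebyshev then produces $\mathbb{P}(|N_n^Q(B,x) - \qnk| \geq \epsilon\,\qnk) \leq C/(\epsilon^2\,\qnk)$. Taking $n_r$ to be the least $n$ with $\qnk \geq r^2$, I apply Borel--Cantelli along the subsequence $(n_r)$ and then interpolate to all $n$ using monotonicity of both $N_n^Q(B,x)$ and $\qnk$, noting that $Q_{n_{r+1}}^{(k)}/Q_{n_r}^{(k)} \to 1$. A union bound over the countably many blocks $B$ yields $\Nk{Q}{k}$ of full measure; the inclusion $\Nk{Q}{k}\subseteq \RNk{Q}{k}$ transfers the conclusion to $\RNk{Q}{k}$.

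For the converse, suppose $Q$ is $k$-convergent, so $\qnk \to L<\infty$. Then $\sum_j \mathbb{E}[X_j] = L$, and by the first Borel--Cantelli lemma, almost surely only finitely many $X_j = 1$; in particular $N_n^Q(B,x)$ is a.s. eventually equal to some random finite integer $N_\infty(B,x)$ with mean $L$. For any block $B$ with $\mathbb{E}[X_j]\in(0,1)$, selecting a subsequence of indices $j$ spaced by at least $k$ gives independent nondegenerate summands, so $N_\infty(B,x)$ has strictly positive variance and is not a.s. constant. If $L\notin\mathbb{Z}$ the event $\{N_n^Q(B,x)/\qnk \to 1\}=\{N_\infty(B,x)=L\}$ has measure zero; if $L\in\mathbb{Z}$, nondegeneracy forces $\mathbb{P}(N_\infty(B,x)\neq L) > 0$. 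Either way, $\Nk{Q}{k}$ is not of full measure. For $\RNk{Q}{k}$, choose two blocks $B_1\neq B_2$ and consider the independent finite-count random variables $N_\infty(B_1,x)$, $N_\infty(B_2,x)$ (up to the small overlap correction); with positive probability these take unequal nonnegative integer values, so the ratio does not tend to $1$.

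Finally, the statements for $\NQ$ and $\RNQ$ follow: when $Q$ is fully divergent, each $\Nk{Q}{k}$ and $\RNk{Q}{k}$ is conull, and $\NQ = \bigcap_k \Nk{Q}{k}$, $\RNQ = \bigcap_k \RNk{Q}{k}$ are countable intersections of conull sets, hence conull; conversely, if $Q$ is $k$-convergent for some $k$ then the previous step already shows that $\Nk{Q}{k}$ (and $\RNk{Q}{k}$) is not of full measure, so neither is the smaller intersection. The main obstacle I anticipate is the clean control of the off-diagonal covariance contribution — this is where the hypothesis that $Q$ is infinite in limit must be used decisively to beat down the correlation terms to $o(\qnk)$; once that variance bound is in hand, Chebyshev and the subsequence Borel--Cantelli argument are routine.
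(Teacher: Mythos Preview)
This theorem is not proved in the present paper: it is quoted from \cite{Mance4} (``We will need the following result of the second author \cite{Mance4} later in this paper''), so there is no in-paper argument against which to compare your proposal.

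That said, your outline is the standard second-moment route and is essentially sound. For the forward direction the key point is exactly what you isolate: independence of the digits under Lebesgue measure gives $\mathrm{Var}\big(N_n^Q(B,x)\big)=O\big(Q_n^{(k)}\big)$ once the $O(k)$ off-diagonal covariance bands are controlled, and ``infinite in limit'' is precisely what forces the trailing factors $1/(q_{i+k}\cdots q_{i+k+\ell-1})$ to be $o(1)$, making that band contribution $o\big(Q_n^{(k)}\big)$. Chebyshev along the subsequence $Q_{n_r}^{(k)}\ge r^2$, Borel--Cantelli, and monotone interpolation then give the a.e.\ limit; your check that $Q_{n_{r+1}}^{(k)}/Q_{n_r}^{(k)}\to 1$ uses that the increments $1/(q_n\cdots q_{n+k-1})$ tend to $0$, again from ``infinite in limit''.

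The one place where your sketch is too loose is the converse for $\RNk{Q}{k}$. The terminal counts $N_\infty(B_1,x)$ and $N_\infty(B_2,x)$ are \emph{not} independent (they are functions of the same digit sequence), so ``independent up to the small overlap correction'' does not do the work. A clean fix: choose disjoint-alphabet blocks, say $B_1=(0,\ldots,0)$ and $B_2=(1,\ldots,1)$, pick $j_0$ large, and compare the two positive-probability events $\{(E_{j_0},\ldots,E_{j_0+k-1})=(0,\ldots,0)\}$ and $\{(E_{j_0},\ldots,E_{j_0+k-1})=(1,\ldots,1)\}$ while conditioning on all other digits; swapping between these events changes $N_\infty(B_1)-N_\infty(B_2)$ by at least $2$ (the overlapping positions $j\in[j_0-k+1,j_0+k-1]$ contribute $0$ to both counts under either event for these particular blocks, since a single ``wrong'' digit kills the match). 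Hence $N_\infty(B_1)\ne N_\infty(B_2)$ with positive probability, and $\RNk{Q}{k}$ is not conull. With that patch your proposal stands on its own.
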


A surprising property of $Q$-normality of order $k$ is that we may not conclude that $\Nk{Q}{k} \subseteq \Nk{Q}{j}$ for all $j <k$ like we may for the $b$-ary expansions. In fact, it was shown in \cite{ppq2} that for every $k$ there exists a basic sequence $Q$  such that $\Nk{Q}{k} \backslash \bigcup_{j=1}^{k-1} \Nk{Q}{j}$ is non-empty. 
Thus, we will have to be more careful in stating exactly what our theorems prove since lack of $Q$-normality of order $2$ does not imply lack of $Q$-normality of order $338$, for example. In \cite{AireyManceHDDifference} for each natural number $\ell$ a class of basic sequences were constructed with the property that
$$
\dimh{\bigcap_{j=\ell}^\infty \Nk{Q}{j} \Big \backslash \bigcup_{j=1}^{\ell-1} \Nk{Q}{j}}=1.
$$
These results will be strongly improved in this paper and we will consider more general sets of the form 
$$
\bigcap_{k \in S} \Nk{Q}{k} \setminus  \bigcup_{k \notin S} \Nk{Q}{k}.
$$
\subsection{$Q$-normality along arithmetic progressions}

In this subsection we will extend the definitions of AP normality of types I and II to certain classes of Cantor series expansions. The statements are at heart very simple, but necessarily get somewhat technical. Thus it may be helpful to keep in mind the definitions in the case of the $b$-ary expansion.

Suppose that $M=(m_t)_t$ is an increasing sequence of positive integers. Let $N_{M,n}^Q(B,x)$ be the number of occurrences of the block $B$ at positions $m_t$ for $m_t \leq n$ in the $Q$-Cantor series expansion of $x$.
 If $x=E_0.E_1E_2 \cdots$ w.r.t. {P}, then put
$$
\ppq(x):=\sum_{n=1}^\infty \frac {\min(E_n,q_n-1)} {\q{n}}.
$$
The functions $\ppq$ and their properties
\footnote{See \cite{ppq1} for an overview of properties such as continuity and multifractal analysis of $\ppq$. There are many fractals associated with the functions $\ppq$, but they will not affect the results discussed in this paper.}
 will be of critical importance to our constructions and were the topic of a predecessor to this paper by the second author \cite{ppq1}. The key property of these functions is the following theorem that was proven in \cite{ppq1}.
\begin{thrm}\labt{mainpsi}

Suppose that $M=(m_t)$ is an increasing sequence of positive integers and  $P$ and $Q$ are basic sequences which are infinite in limit.
  If $x=E_0.E_1E_2\cdots$ w.r.t $P$ satisfies
$E_n< q_{n}-1$ for infinitely many $n$, then for every block~$B$
$$
N_{M,n}^{Q}\left(B,\psi_{P,Q}(x)\right)
=N_{M,n}^{P}(B,x)+O(1).
$$
\end{thrm}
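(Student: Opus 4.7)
The plan is to exhibit the $Q$-Cantor series expansion of $\ppq(x)$ explicitly---its $n$-th digit is $F_n := \min(E_n, q_n - 1)$---and then to show that for each fixed block $B$, the set of starting positions of $B$ in the $P$-expansion of $x$ differs from the set of starting positions of $B$ in the $Q$-expansion of $\ppq(x)$ at only finitely many indices, with a bound depending on $B$, $P$, $Q$ but independent of $n$ and $M$.

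First I would verify that $\ppq(x) = \sum_{n \geq 1} F_n / \q{n}$ is genuinely the $Q$-Cantor series expansion of $\ppq(x)$. Each $F_n$ lies in $\br{0, 1, \ldots, q_n - 1}$, and since $F_n = q_n - 1$ precisely when $E_n \geq q_n - 1$, the hypothesis that $E_n < q_n - 1$ for infinitely many $n$ translates to $F_n < q_n - 1$ infinitely often. This is exactly the non-degeneracy criterion ensuring that the sum is the (unique) $Q$-Cantor series expansion.

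Next, fix a block $B = (b_1, \ldots, b_k)$ and let $\mathscr{A}_P$, $\mathscr{A}_Q$ denote the sets of indices $m$ at which $B$ starts in the $P$-expansion of $x$ and in the $Q$-expansion of $\ppq(x)$, respectively. I would compare the two sets coordinate by coordinate. If $m \in \mathscr{A}_P$, then $E_{m+i-1} = b_i$ for every $i$, and hence $F_{m+i-1} = b_i$ as soon as $b_i < q_{m+i-1}$; since $P$ and $Q$ are infinite in limit, this fails for at most finitely many $m$. Conversely, if $m \in \mathscr{A}_Q$, then for each $i$ one has either $E_{m+i-1} = b_i$ (in which case the $P$-block matches at this coordinate) or else $E_{m+i-1} \geq q_{m+i-1}$, which forces $b_i = q_{m+i-1} - 1$; the second alternative again requires $q_{m+i-1} \leq b_i + 1$, so is ruled out for all but finitely many $m$. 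Taking a union over the $k$ coordinates shows that $\mathscr{A}_P \triangle \mathscr{A}_Q$ is a fixed finite set depending only on $B$, $P$, $Q$.

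Finally, because $N_{M,n}^P(B,x) = \#(\mathscr{A}_P \cap \br{m_t : m_t \leq n})$ and similarly for $Q$, intersecting both sets with the same sampling window yields $|N_{M,n}^P(B,x) - N_{M,n}^Q(B, \ppq(x))| \leq |\mathscr{A}_P \triangle \mathscr{A}_Q|$, uniformly in $n$ and in the choice of $M$. The hard part---though really it is quite mild---is confirming that the $O(1)$ constant is genuinely uniform across all sampling sequences; but once the symmetric difference is a fixed finite set, intersection with any subset of $\mathbb{N}$ preserves the bound. The entire argument rests on the observation that the clipping $E_n \mapsto \min(E_n, q_n - 1)$ alters block statistics only at the finitely many initial indices where $q_n$ has yet to outgrow the entries of $B$.
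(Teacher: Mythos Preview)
Your argument is correct. The paper does not actually prove this theorem; it is quoted from an earlier work (reference \cite{ppq1}) and used as a black box throughout, so there is no in-paper proof to compare against. Your three-step approach---identify the $Q$-digits of $\ppq(x)$ as $F_n=\min(E_n,q_n-1)$, show that the occurrence sets $\mathscr{A}_P$ and $\mathscr{A}_Q$ of a fixed block $B$ have finite symmetric difference because the clipping can only matter at indices where $q_n\le\max_i b_i+1$, and then observe that intersecting with any sampling set $\{m_t:m_t\le n\}$ preserves the bound $|\mathscr{A}_P\triangle\mathscr{A}_Q|$---is exactly the natural way to prove the statement and is presumably how it is done in \cite{ppq1}. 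One minor remark: only the hypothesis that $Q$ is infinite in limit is actually used in your argument (both directions of the symmetric-difference bound come down to $q_n$ eventually exceeding the digits of $B$); the assumption on $P$ is not needed for this particular conclusion, though it is harmless.
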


Given a basic sequence $M$, we define the basic sequence $\Lambda_M(Q):=(q_{m_t})_{t=1}^\infty$.
If $x=E_0.E_1E_2\cdots$ w.r.t. $Q$, then let 
$$
\Upsilon_{Q,M}(x):=0.E_{m_1}E_{m_2}E_{m_3}\cdots \hbox{ w.r.t. }\Lambda_M(Q).
$$ 
For $m \in \mathbb{N}$ and $0 \leq r \leq m-1$ let $\mathscr{A}_{m,r}:=(mt+r)_{t=0}^\infty$. 
Set 
\begin{align*}
N_{n,m,r}^Q(B,x)&:=N_{\mathscr{A}_{m,r},n}(B,x);\\
{N_{n,m,r}^Q}^\prime(B,x)&:=N_n^{\Lambda_{\mathscr{A}_{m,r}}(Q)}(B,\Upsilon_{Q,\mathscr{A}_{m,r}}(x));\\
Q_{n}^{(m,r)} &:= \sum_{j=0}^{\lfloor \frac {n-r}{m} \rfloor} \frac {1}{q_{mj+r}q_{mj+r+1}...q_{mj+r+m-1}}.
\end{align*}
The following definition is motivated by \refd{normalBorel} and \reft{basebnormalii}.

\begin{definition}\labd{apnormal}
Let $\NAPImr{Q}{m}{r}$ be the set of real numbers $x$ such that 
$$
\lim_{n \rightarrow \infty} \frac{N_{n,m,r}^Q(B,x)}{Q_{n}^{(m,r)}}=1
$$
for all blocks $B$ of length $m$. 
A real number $x$ is  {\it AP $Q$-normal of type I} if 
$$
x \in \NAPIQ:=\bigcap_{m=1}^\infty \bigcap_{r = 0}^{m-1} \NAPIk{Q}{m,r}.
$$
Let 
$$
\NAPIIkmr{Q}{k}{m}{r}:=\UPAmri\left( \Nk{\LAAmr}{k}  \right).
$$
We say $x$ is {\it AP $Q$-normal of type $II$} if
$$
x \in \NAPII{Q}:= \bigcap_{m = 1}^\infty \bigcap_{ r = 0}^{m-1} \bigcap_{k=1}^\infty \NAPIIkmr{Q}{k}{m}{r}.
$$
 We also say a real number $x$ is {\it AP $Q$-abnormal of type I} if $x$ is an element of
 $$\NAPAbI{Q} := \R \setminus \bigcup_{m = 2}^\infty \bigcup_{r = 0}^{m-1} \NAPImr{Q}{m}{r} $$
 and {\it AP $Q$-abnormal of type II} if $x$ is an element of
 $$
\NAPAbII{Q} := \R \setminus \bigcup_{m = 2}^\infty \bigcup_{r = 0}^{m-1}  \bigcup_{k=1}^\infty \NAPIIkmr{Q}{k}{m}{r}.
 $$
The sets\footnote{It is unknown how the sets $\RNQ$, $\RNAPIQ$, and $\RNAPIIQ$ are related except that $\RNAPIQ \subseteq \RNQ$.}
 $\RNAPIQ$ and $\RNAPIIQ$ are defined similarly to $\NAPIQ$ and $\NAPIIQ$. Note that $\NAPIQ \subseteq \RNAPIQ$ and $\NAPIIQ \subseteq \RNAPIIQ$.
\end{definition}

The sets $\NAPI{Q}$ and $\NAPII{Q}$ introduced in \refd{apnormal} give a natural extension of the notions of AP normality of type I and II given in \refd{normalBorel} and \reft{basebnormalii}.

A basic sequence $Q$ is {\it $(m,r)$-divergent of type I (resp. $(k,m,r)$-divergent of type II)} if $\lim_{n \rightarrow \infty} \QnmrI =\infty$
(resp. $\lim_{n \rightarrow \infty} (\LAAmr)_n^{(k)}=\infty$).
$Q$ is {\it fully divergent of type I} if $Q$ is $(m,r)$-divergent of type I  for all $m \geq 1$, and $0 \leq r \leq m-1$.
$Q$ is {\it fully divergent of type II} if $Q$ is $(k,m,r)$-divergent of type II  for all $k\geq 1$, $m\geq 1$, and $0 \leq r \leq m-1$.
Suppose that $Q$ is infinite in limit. It follows by Theorems $4.6$ and $4.11$ and their proofs in \cite{Mance4} that $\NAPImr{Q}{m}{r}$ (resp. $\NAPIIkmr{Q}{k}{m}{r}$) is a set of full Lebesgue measure if and only if $Q$ is $(m,r)$-divergent of type I (resp. $(k,m,r)$-divergent of type II).

\begin{definition}
 A real number $x$ is \textit{AP $Q$-distribution normal} if for all $m \in \mathbb{N}$ and $0 \leq r < m$ we have that $\pr{T_{Q,mn+r}(x)}_{n}$ is u.d. mod 1.
We say that $x$ is \textit{AP $Q$-distribution abnormal} if $\pr{T_{Q,mn+r}(x)}_{n}$ is not u.d. mod 1 for any $m>1$. We denote the set of AP $Q$-distribution abnormal numbers $\DNAb$.
\end{definition}

It was shown by the authors in \cite{AireyMance1} that $\dimh{\DNQ \cap \DNAb}=1$ whenever $Q$ is infinite in limit.
We will later study the set $\NQ \cap \NAPAbI{Q}  \setminus  \NAPII{Q}$.

\subsection{Computability}
\begin{definition}
A sequence of integers $(a_n)$ is {\it computable} if there is a total recursive function $f: \mathbb{N} \to \mathbb{N}$ such that $f(n) = a_n$.
A real number $x$ is {\it computable} if there is a total recursive function $f: \mathbb{N} \to \mathbb{N}$ such that
$$
\frac{f(n)}{n} \leq x \leq \frac{f(n)+1}{n}.
$$
\end{definition}
Informally, we may think of a computable real number as one that we may write a computer program to approximate to arbitrary precision.

A real number is {\it absolutely normal} if it is normal in base $b$ for all $b \geq 2$.
M. W. Sierpi\'{n}ski gave an example of an absolutely normal number that is not computable in \cite{Sierpinski}.
The authors feel that examples such as M. W. Sierpi\'{n}ski's are not fully explicit since they are not computable real numbers, unlike Champernowne's number. 
A. M. Turing gave the first example of a computable absolutely normal number in an unpublished manuscript. This paper may be found in his collected works \cite{Turing}. The $n$'th digit of A. M. Turing's number may be computed with an algorithm that is doubly exponential in $n$. V. Becher, P. A. Heiber, and T. A. Slaman constructed an absolutely normal number in \cite{BecherHeiberSlaman} whose digits may be computed in polynomial time.
See \cite{BecherFigueiraPicchi} by V. Becher, S. Figueira, and R. Picchi for further discussion.

P. Lafer \cite{Lafer} asked for a construction of a $Q$-distribution normal number for an arbitrary basic sequence $Q$. We remark that there are basic sequences $Q$ such that no $Q$-distribution normal number is computable. A. A. Beros and K. A. Beros showed in \cite{BerosBerosComputable} that there exists a limit computable basic sequence $Q$ such that no computable real number is $Q$-distribution normal.
Thus, it is impossible to answer P. Laffer's question for an arbitrary basic sequence. It remains open how much one needs to assume about a basic sequence $Q$ in order to guarantee existence of computable $Q$-distribution normal numbers and how to construct one of these numbers.

Note in particular that if $Q = (q_n)$ and $(E_n)$ are computable sequences then $\sum_{n=1}^\infty \frac{E_n}{q_1 \cdots q_n}$ is a computable real number.

\subsection{Statement of results}

\begin{definition}
We say a subset $S \subseteq \mathbb{N}$ is \textit{almost closed under addition} if for each $k \in S$ the set $(S - k) \cap \mathbb{N} \setminus S$ has asymptotic density $0$.
\end{definition}
Examples of such sets include the following
\begin{itemize}
\item Sets of density $0$ or $1$ such as $\mathbb{N}$, any finite set, or the primes
\item Sets for which there exists a set $C$ closed under addition such that $S \subseteq C$ and $d^*(S) = d^*(C)$.
\item $\bigcup_{k \in \mathbb{N}} \{k^2, k^2 + 1, \cdots, k^2 + k\}$
\end{itemize}
For the remainder of this paper, we will assume that all $S \subseteq \mathbb{N}$ are almost closed under addition.

We first state weaker, but less technical, versions of theorems that we will prove. These theorems will at least say some of what we can do with comparing normality of different orders as well as considering sets of numbers that are normal but not AP normal.

\begin{thrm}\labt{HDweak1}
There exists a fully divergent, infinite in limit basic sequence $Q$ such that
$$
\dimh{\bigcap_{k \in S} \Nk{Q}{k} \setminus  \bigcup_{k \notin S} \Nk{Q}{k} } = 1.
$$
If the function $k \mapsto  \one_S(k)$ is computable, then the basic sequence $Q$ can be chosen to be computable.
\end{thrm}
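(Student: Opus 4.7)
The plan is to construct $Q$ directly as a piecewise-constant basic sequence over rapidly growing ``windows'', and to prescribe on each window a finite digit word that is simultaneously normal of all orders in $S$ (up to a growing threshold) and abnormal of all orders outside $S$ (up to the same threshold). Concatenating the words gives a single $Q$-Cantor expansion with the desired selective normality, and the freedom in choosing each word provides a full-Hausdorff-dimension family.

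First I would partition $\mathbb{N}$ into consecutive windows $I_1, I_2, \ldots$ with $|I_j| = \ell_j$ and set $q_n = b_j$ for $n \in I_j$, where $b_j, \ell_j, K_j \to \infty$ are chosen so rapidly that each window's contribution dominates the running total. For $n$ just past $I_j$ one has $Q_n^{(k)} = \sum_{i \le j} \ell_i b_i^{-k} + O(k)$, modulo at most $k-1$ boundary terms per window, so $Q$ is automatically infinite in limit and, with the $\ell_j, b_j$ chosen to satisfy $\ell_j / b_j^{K_j} \to \infty$, fully divergent. Within $I_j$ I would construct a word $D_j \in \{0,1,\ldots,b_j-1\}^{\ell_j}$ whose empirical $k$-block frequencies equal $b_j^{-k}(1+o(1))$ for every $k \in S \cap [1,K_j]$, while for every $k \in [1,K_j] \setminus S$ some fixed block $B_k$ of length $k$ has frequency uniformly bounded away from $b_j^{-k}$. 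This step is the analytic heart of the argument, and the hypothesis that $S$ is almost closed under addition enters exactly here: the natural obstruction to simultaneously prescribing correct frequencies at all $k \in S$ is that blocks of length $k_1 + k_2$ with $k_1, k_2 \in S$ may require coordinated statistics, and the density-zero condition on $(S - k) \cap (\mathbb{N} \setminus S)$ is precisely what allows a variant of the Colebrook / Bugeaud argument behind \reft{diffbary} and \reft{BugeaudSimple} to produce $D_j$ with the prescribed selective normality while retaining $b_j^{\ell_j (1 - o(1))}$ admissible choices.

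Once the $D_j$ are built, concatenation yields $x = E_0.E_1 E_2 \cdots$ w.r.t.\ $Q$ with $N_n^Q(B,x) = \sum_j N^{I_j}(B, D_j) + O(k)$, giving $N_n^Q(B,x)/Q_n^{(k)} \to 1$ for every block $B$ of length $k \in S$ and failure of convergence for at least one $B$ when $k \notin S$, so $x \in \bigcap_{k \in S} \Nk{Q}{k} \setminus \bigcup_{k \notin S} \Nk{Q}{k}$. Hausdorff dimension follows by applying a standard mass-distribution argument to the tree of admissible cylinders $(D_j)$: the branching factor $b_j^{\ell_j(1-o(1))}$ against cylinder diameter $(b_1^{\ell_1} \cdots b_j^{\ell_j})^{-1}$ produces a Frostman measure of any exponent strictly less than $1$. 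Computability is immediate: when $\one_S$ is computable one may choose $b_j, \ell_j, K_j$ computably, locate an admissible $D_j$ by bounded effective search at stage $j$, and then both $Q$ and an explicit $x$ are computable. The main obstacle will be the construction of $D_j$ with both the prescribed selective normality and nearly full exponential freedom --- precisely the step where the almost-closed-under-addition hypothesis is indispensable.
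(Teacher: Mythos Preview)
Your proposal has a fatal gap at the step you flag as the analytic heart. In a window where $q_n = b_j$ is constant, the block statistics are nested exactly as in the $b$-ary case: for any block $B$ of length $k$ with digits below $b_j$ one has $\sum_{c=0}^{b_j-1} N^{I_j}(Bc, D_j) = N^{I_j}(B, D_j) + O(1)$. Hence if all $(k{+}1)$-blocks in $D_j$ occur with frequency $b_j^{-(k+1)}(1+o(1))$, every $k$-block automatically occurs with frequency $b_j^{-k}(1+o(1))$. No word $D_j$ over a single alphabet can be ``normal of order $k{+}1$ but not of order $k$'', so your construction is impossible whenever $S$ contains some $k{+}1$ with $k \notin S$ --- in particular for the flagship example $S = 2\mathbb{N}$ from the abstract. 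The phenomenon the theorem captures is precisely that Cantor series, unlike $b$-ary expansions, permit such non-nesting, and this requires the base to vary within each window; a piecewise-constant $Q$ cannot exhibit it. Your explanation of where the almost-closed hypothesis enters is accordingly off target: the obstruction is not a coordination problem among orders inside $S$, but the rigid downward implication $k{+}1 \Rightarrow k$ in any fixed base.

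The paper's route is essentially orthogonal to yours. Instead of engineering the digit word, it takes a number $x$ that is fully $P$-normal for an auxiliary basic sequence $P$ and pushes it forward via $\psi_{P,Q}$; by \reft{mainpsi} the block counts satisfy $N_n^Q(B,\psi_{P,Q}(x)) = N_n^P(B,x)+O(1)$, so selective normality is produced entirely through the ratios $P_n^{(k)}/Q_n^{(k)}$. Those ratios are controlled by setting $p_n \approx c_{t,r}^{-1} q_n$ with periodic coefficients $c_{t,r}$ chosen so that $\sum_{i} c_{t,i}\cdots c_{t,i+k-1}/(2t)\to 1$ iff $k\in S$ (\refl{freq}); the almost-closed-under-addition hypothesis enters in bounding the error term $b_{t,k}$ of the approximate solution $(t,1,\ldots,1)$ to this product-sum system. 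Full Hausdorff dimension (\reft{HDmain}) is then obtained not from freedom in the normality-carrying digits but by interleaving additional windows where $q_n$ is enormous (so they contribute negligibly to $Q_n^{(k)}$) and the digits are essentially free, and applying the Moran-set estimate of Feng--Wen--Wu (\reft{moran}). Computability follows because both the $c_{t,r}$ and an explicit $P$-normal $x$ can be produced effectively.
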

Thus, there exists a computable basic sequence $Q$ where the set of numbers that are $Q$-normal of all prime orders and not $Q$-normal of all composite orders has full Hausdorff dimension. Similarly, we may choose a computable $Q$ where the set of numbers that are $Q$-normal of all even orders, but not $Q$-normal of all odd orders has full Hausdorff dimension. However, \reft{HDweak1} does not guarantee that there exists a basic sequence $Q$ where there is even one real number that is $Q$-normal of all odd orders and not $Q$-normal of any even orders.\footnote{The authors strongly believe that such $Q$ exist. See the discussion in the next section.}

\begin{thrm}\labt{HDweak2}
There exists a fully divergent, infinite in limit basic sequence $Q$ such that
$$
\dimh{\NQ \cap \NAPAbI{Q}  \setminus  \NAPII{Q} } = 1.
$$
\end{thrm}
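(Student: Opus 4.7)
The plan is to construct a basic sequence $Q$ whose multi-scale block structure automatically forces AP abnormality of both types, and then to produce a full-dimensional family of $Q$-normal numbers inheriting this abnormality via the transfer theorem \reft{mainpsi} for $\ppq$.

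I would begin by designing $Q$ so that it is infinite in limit and fully divergent, but has a structural asymmetry with respect to arithmetic progressions. Fix a small integer $b \geq 2$ and partition $\mathbb{N}$ into consecutive blocks $I_1, I_2, \ldots$ of rapidly growing length. Assign values $q_n$ on each $I_j$ so that (i) the overall averages satisfy $Q_n^{(k)} \sim n/b^k$ for every $k \geq 1$ (so the target rate for $Q$-normality coincides with the base-$b$ rate), while (ii) for every pair $(m, r)$ with $m \geq 2$ and $0 \leq r < m$, the restricted average satisfies $Q_n^{(m,r)} \sim c_{m,r} \cdot n/b^m$ for some constant $c_{m,r} \neq 1/m$. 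Since $\sum_{r=0}^{m-1} c_{m,r} = 1$ automatically, this just says that the residue classes mod $m$ do not split $Q_n^{(m)}$ evenly. A diagonal enumeration of the pairs $(m, r)$ with superpolynomially growing block lengths allows the design of $Q$ to realize each imbalance in the limit without disturbing the other constraints.

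Next, I would take $P = (b, b, \ldots)$ so that $P$-normality coincides with classical base-$b$ normality, and apply $\ppq$. By \reft{mainpsi}, applied both with $M = \mathbb{N}$ and with $M = \mathscr{A}_{m,r}$,
$$
N_n^Q(B, \ppq(x)) = N_n^P(B, x) + O(1), \quad N_{n,m,r}^Q(B, \ppq(x)) = N_{n,m,r}^P(B, x) + O(1).
$$
For any base-$b$ normal $x$, the base-$b$ case of \reft{basebnormalii} gives $N_n^P(B, x) \sim n/b^{|B|}$ and $N_{n,m,r}^P(B, x) \sim (n/m)/b^m$ for blocks of length $m$. Combined with the design of $Q$, this yields $N_n^Q(B, \ppq(x))/Q_n^{(k)} \to 1$ for every $k$ (so $\ppq(x) \in \NQ$), and $N_{n,m,r}^Q(B, \ppq(x))/Q_n^{(m,r)} \to 1/(m c_{m,r}) \neq 1$ for every $(m, r)$ with $m \geq 2$ (so $\ppq(x) \in \NAPAbI{Q}$). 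AP abnormality of type II is then essentially automatic: the extracted expansion $\Upsilon_{Q, \mathscr{A}_{m,r}}(\ppq(x))$ has all of its digits in $\{0, 1, \ldots, b-1\}$, while the sub-basic-sequence $\Lambda_{\mathscr{A}_{m,r}}(Q) = (q_{mt+r})$ tends to infinity, so digits $\geq b$ have frequency zero rather than the positive frequency required for normality, forcing $\ppq(x) \notin \NAPII{Q}$.

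For the Hausdorff dimension bound, the direct $\ppq$-image of the set of base-$b$ normal numbers has dimension $\lim \log b^n / \log(q_1 \cdots q_n)$, which is strictly less than $1$ since $q_n \to \infty$. One thus either (a) arranges $Q$ so that it simultaneously accommodates a sequence of bases $b_j \to \infty$, applies the construction for each $b_j$, and unions the resulting sets (each giving dimension approaching $1$), or (b) constructs a measure directly on the $Q$-Cantor digit space with position-dependent marginals tailored to deliver $Q$-normality through balanced averages and AP abnormality through imbalanced residue-class sums, then applies the mass distribution principle. The main technical obstacle, in either approach, is the simultaneous calibration of $Q$: arranging the countable list of asymptotic constraints—$Q$ infinite in limit and fully divergent, $Q_n^{(k)} \sim n/b^k$ for every $k$, and $c_{m,r} \neq 1/m$ for every $(m,r)$ with $m \geq 2$—to coexist with the dimension-one requirement forces a delicate diagonal argument on the multi-scale block structure, with each block designed to repair the drift introduced by previous blocks while installing the next prescribed imbalance.
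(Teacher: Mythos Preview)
Your proposal has a fundamental incompatibility that breaks the argument before the dimension step. You take $P=(b,b,\ldots)$ constant and want $Q$ infinite in limit with $Q_n^{(k)}\sim n/b^k$. But if $q_n\to\infty$ then each term $1/(q_j\cdots q_{j+k-1})\to 0$, so by Ces\`aro the average $\frac{1}{n}Q_n^{(k)}\to 0$; hence $Q_n^{(k)}\sim n/b^k$ is impossible for any fixed $b$. Worse, even ignoring rates: if $x$ is written in base $b$ then every $Q$-digit of $\ppq(x)$ lies in $\{0,\ldots,b-1\}$, so for any block $B$ containing a digit $\ge b$ we have $N_n^Q(B,\ppq(x))=0$ while $Q_n^{(k)}\to\infty$, giving limit $0\neq 1$. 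Thus $\ppq(x)\notin\Nk{Q}{k}$ for \emph{any} $k$, and in particular $\ppq(x)\notin\NQ$. The same bounded-digit observation you use to exclude $\NAPII{Q}$ membership simultaneously excludes $\NQ$ membership; you cannot have one without the other by this route. (Also, \reft{mainpsi} as stated requires $P$ infinite in limit.) Your option (a) does not escape this: letting $b_j\to\infty$ still leaves each image set outside $\NQ$, so the union is disjoint from $\NQ$. Option (b) is a complete restart rather than a repair, and is too schematic to evaluate.

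The paper avoids this trap by never using a constant $P$. Instead it takes $P$ to be a multiplicative perturbation of $Q$ itself, $p_n\approx c_{t,r}^{-1}q_n$ on most positions (with periodically placed huge values $2^n q_n$), so both $P$ and $Q$ are infinite in limit and the digit alphabets match. The coefficients $c_{t,r}$ are chosen so that $P_n^{(k)}/Q_n^{(k)}\to 1$ for all $k$ (giving $Q$-normality via \reft{mainpsi}) while $P_n^{(m,r)}/Q_n^{(m,r)}\not\to 1$ for $m\ge 2$ (giving AP abnormality). This yields \reft{main}, producing a single point $\xi$ with the desired properties. For full Hausdorff dimension (\reft{HDmain}, of which \reft{HDweak2} is the case $S=\mathbb{N}$), the paper builds a new basic sequence by interleaving the blocks of the original with very long blocks of enormous bases $2^{2^n}n$; a homogeneous Moran set is then formed by fixing the digits on the original positions to be those of $\xi$ and allowing nearly free choice on the long inserted blocks. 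The inserted blocks contribute full dimension (via the Feng--Wen--Wu bound) but negligibly to all the frequency ratios, so the normality/abnormality pattern of $\xi$ is inherited by every point of the Moran set.
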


\reft{HDweak1} and \reft{HDweak2} will follow as corollaries of the following much more powerful theorems that we will prove.

\begin{thrm}\labt{main}
If $Q$ is a basic sequence which is fully divergent, non-decreasing, and infinite in limit, and $S$ is almost closed under addition then
$$
\RNAPI{Q} \cap \RNAPII{Q} \cap \NAPAbI{Q} \cap \bigcap_{k \in S} \Nk{Q}{k} \setminus \pr{\NAPII{Q} \cup \bigcup_{k \notin S} \Nk{Q}{k} } \neq \emptyset.
$$
\end{thrm}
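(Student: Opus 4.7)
The plan is to construct $y$ directly by specifying its $Q$-digit expansion $y=0.E_1E_2\cdots$ via a block construction.  Choose an extremely fast-growing sequence $1=n_0<n_1<n_2<\cdots$ and partition $\mathbb{N}$ into blocks $\mathcal{I}_j=(n_{j-1},n_j]$, assigning each block one of two roles.  On \emph{normality-enforcing} blocks, the digits are placed using a Champernowne-style enumeration of $Q$-admissible digit strings, of the sort underlying the proof of \reft{measure}, so that for every length $k$ and every $k$-block $B$, the partial count $N_{n_j}^Q(B,y)$ is driven to within a $(1+o(1))$ factor of $Q_{n_j}^{(k)}$ and the orbits $\pr{T_{Q,mn+r}(y)}_n$ equidistribute uniformly in $[0,1)$ on these scales.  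On \emph{sabotage} blocks, the digits follow a near-periodic rule $E_n=g_j(n\bmod m_j)$ with $m_j\geq 2$ and $g_j(i)\in\{0,\ldots,q_n-1\}$, where the pair $(m_j,g_j)$ runs through a diagonal enumeration guaranteeing that every triple $(m,r,k)$ with $m\geq 2$ is eventually addressed.

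The sabotage blocks are calibrated so that they inflate all length-$k$ block counts \emph{uniformly by the same multiplicative factor}, which automatically delivers $Q$-ratio normality and both of its AP variants, since the pairwise ratios of counts converge to $1$ even when the absolute counts fail to converge to $Q_n^{(k)}$.  The hypothesis that $S$ is almost closed under addition is used to calibrate the block boundaries $n_j$: for $k\in S$ the lengths are chosen so that the cumulative sabotage contribution is asymptotically negligible compared to $Q_n^{(k)}$, while for $k\notin S$ the inflation forces a positive-density discrepancy from $1$.  The key point is that almost-closedness permits a \emph{single} choice of $(n_j)$ to succeed simultaneously for every $k\in S$, because the usual aggregation passage from longer-block normality to shorter-block normality within $S$ loses only a density-zero set of target orders, which can be absorbed into the $o(1)$ error.

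AP-type-II abnormality is obtained as a consequence of the periodicity of the sabotage: when $E_n$ follows a period-$m_j$ rule on $\mathcal{I}_j$ and $m_j$ divides $m$, the extracted sequence $\Upsilon_{Q,\mathscr{A}_{m,r}}(y)$ inherits a nearly-constant stretch of length $|\mathcal{I}_j|/m_j$, which is long enough (by the choice of $n_j$) to prevent $\Lambda_{\mathscr{A}_{m,r}}(Q)$-normality of any order $k$.  The transport result \reft{mainpsi} is used to move back and forth between block statistics of the extracted expansion w.r.t.\ $\Lambda_{\mathscr{A}_{m,r}}(Q)$ and block statistics of $y$ w.r.t.\ $Q$ with only an $O(1)$ error.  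AP-type-I abnormality is a direct consequence of the same periodicity, which keeps $N_{n,m,r}^Q(B,y)/Q_n^{(m,r)}$ away from $1$ on a positive-density subsequence of indices $n$.

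The main obstacle will be balancing the multiplicative inflation caused by the sabotage blocks against the almost-closed-under-addition calibration, while simultaneously diagonalising over all $(m,r,k)$ to force AP-type-II abnormality at every scale, without accidentally restoring normality at some excluded order or along some excluded arithmetic progression.  Overcoming this requires a finite-injury-style priority argument in the choice of the pairs $(m_j,g_j)$: at each stage the construction must restore any previously-harmed $k\in S$ normality requirement while respecting all earlier abnormality commitments, which is made feasible by the super-exponential growth of $n_j$ and by the $O(1)$-accurate transport properties of the functions $\psi_{P,Q}$ supplied by \reft{mainpsi}.
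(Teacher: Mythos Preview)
Your proposal has a substantive gap: periodic sabotage and uniform multiplicative inflation are incompatible.  If on a sabotage block the digits obey $E_n=g_j(n\bmod m_j)$ with period $m_j$, then only the (at most $m_j$) cyclic subwords of $g_j$ of each length $k$ ever occur there.  Since $q_n\to\infty$, the number of admissible $k$-blocks at position $n$ tends to infinity, so almost every $k$-block receives zero contribution from the sabotage while a handful of specific ones receive all of it.  If the sabotage contribution is large enough to push $N_n^Q(B,y)/Q_n^{(k)}$ away from $1$ for some $B$ (as you need for $k\notin S$), then for blocks $B'$ absent from the periodic pattern the ratio $N_n^Q(B,y)/N_n^Q(B',y)$ is pushed away from $1$ as well, destroying $Q$-ratio normality---precisely the property you claim to secure.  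Conversely, if the sabotage is made negligible enough to preserve ratio normality, it is also too small to prevent $Q$-normality of the excluded orders.  Your sentence about sabotage ``inflating all length-$k$ block counts uniformly by the same multiplicative factor'' is exactly the unjustified step.  The invocation of the almost-closed-under-addition hypothesis is also vague: you describe it as controlling an ``aggregation passage from longer-block normality to shorter-block normality within $S$,'' but the hypothesis says something different (for $k\in S$, the set of $j\in S$ with $j+k\notin S$ has density zero), and you give no mechanism linking it to the calibration of the $n_j$.

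The paper resolves this tension by a completely different mechanism.  It does not build $y$ directly; instead it constructs an auxiliary basic sequence $P$ from $Q$ by rescaling: along successive windows of length $2t$ (with $t\to\infty$ slowly) one sets $p_n\approx c_{t,r}^{-1}q_n$ for the first $t$ positions of the window and $p_n=2^nq_n$ for the rest, where the constants $c_{t,r}$ depend only on $S$ and $\epsilon$.  One then picks any $x\in\N{P}\cap\NAPI{P}\cap\NAPII{P}$ (such $x$ exist because $P$ inherits full divergence of both AP types from $Q$ via \refl{fulldivAPdiv}) and sets $y=\psi_{P,Q}(x)$.  By \reft{mainpsi}, every block count of $y$ w.r.t.\ $Q$ equals the corresponding count of $x$ w.r.t.\ $P$ up to $O(1)$, so all length-$k$ counts of $y$ are asymptotically $P_n^{(k)}$ and hence \emph{equal to one another}; ratio normality and both AP-ratio-normality variants follow automatically, with no balancing or priority argument needed.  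Whether $y\in\Nk{Q}{k}$ then reduces to the purely arithmetic question of whether $P_n^{(k)}/Q_n^{(k)}\to 1$, and \refl{freq} shows the $c_{t,r}$ can be chosen so that this limit is $1$ exactly when $k\in S$; the almost-closed-under-addition hypothesis enters precisely here, to bound the error term $b_{t,k}$ in that lemma.  The same lemma, applied to arithmetic-progression partial sums of products of the $c_{t,r}$, gives $\NAPAbI{Q}$ and $y\notin\NAPII{Q}$.  In short: the paper keeps the digit statistics perfectly normal relative to an auxiliary base $P$ and engineers the mismatch $P_n^{(k)}/Q_n^{(k)}$---not the digits themselves---to select which orders are $Q$-normal.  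Your direct block construction has no analogue of this decoupling, and that is why it cannot reconcile ratio normality with selective non-normality.
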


The following follows by a slight modification of the proof of \reft{main}. 
\begin{cor}
If $Q$ is a basic sequence which is non-decreasing and infinite in limit then
$$
\RNAPI{Q}\cap \RNAPII{Q} \cap \NAPAbI{Q} \setminus \NAPII{Q} \neq \emptyset.
$$
\end{cor}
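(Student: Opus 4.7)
The plan is to reuse the construction from the proof of \reft{main} almost verbatim, with two simplifications. Take $S = \emptyset$, so that the clause $\bigcap_{k \in S} \Nk{Q}{k}$ becomes trivial and the clause $\bigcup_{k \notin S} \Nk{Q}{k}$ is simply dropped from the target set. Then replace every demand of ordinary $Q$-normality appearing in the construction by the weaker demand of $Q$-ratio normality (of the corresponding AP type). The point is that full divergence of $Q$ enters the proof of \reft{main} only to guarantee that the block-count totals $N_n^Q(B,x)$ grow like $Q_n^{(k)}$, so that the ratio $N_n^Q(B,x)/Q_n^{(k)}$ can tend to $1$. Ratio normality of order $k$ makes no reference to $Q_n^{(k)}$ at all, so this hypothesis may be dropped.

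Concretely, I would construct $x = 0.E_1 E_2 E_3 \ldots$ w.r.t.\ $Q$ by partitioning $\mathbb{N}$ into consecutive stages and, on most stages, choosing the $E_n$ via the same balanced cyclic enumeration of blocks of increasing length used in \reft{main}, now weighted so that block counts of any two blocks of the same length become asymptotically equal both along $\mathbb{N}$ and along every arithmetic progression $\mathscr{A}_{m,r}$. This yields $x \in \RNAPI{Q} \cap \RNAPII{Q}$ without any appeal to the size of $Q_n^{(k)}$. For each pair $(m,r)$ with $m \geq 2$ and $0 \leq r < m$, reserve a sparse family of stages on which the digits lying in positions $\mathscr{A}_{m,r}$ are forced to repeat a fixed admissible symbol, so that some block of length $m$ has asymptotic relative frequency strictly below $1$ along $\mathscr{A}_{m,r}$; this delivers $x \in \NAPAbI{Q}$. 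For the failure of type II AP normality, use the translation $\Upsilon_{Q,\mathscr{A}_{m,r}}$ together with \reft{mainpsi} to insert analogous forced patterns into the subsequence $(E_{mt+r})_t$, viewed as a digit sequence for $\Lambda_{\mathscr{A}_{m,r}}(Q)$, so that $\Upsilon_{Q,\mathscr{A}_{m,r}}(x)$ fails to be $\Lambda_{\mathscr{A}_{m,r}}(Q)$-normal and hence $x \notin \NAPII{Q}$.

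The main obstacle is to weave the spoilers for the countably many pairs $(m,r)$ together without destroying AP ratio normality along any single progression. This is handled by a diagonal schedule in which the density of spoiler stages assigned to the pair $(m,r)$ at stage $t$ tends to $0$ fast enough as $(m,r)$ is enumerated; then the spoilers contribute a vanishing fraction of positions along any fixed $\mathscr{A}_{m',r'}$. The hypothesis that $Q$ is non-decreasing ensures that the weights $1/(q_j \cdots q_{j+k-1})$ of nearby blocks of a given length are comparable, so a vanishing density of disturbed positions produces a vanishing perturbation of the ratios between block counts. With this bookkeeping the argument of \reft{main} goes through unchanged and produces a single $x$ lying in $\RNAPI{Q} \cap \RNAPII{Q} \cap \NAPAbI{Q} \setminus \NAPII{Q}$.
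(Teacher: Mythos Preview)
Your plan departs from the paper in a way that introduces a genuine contradiction. The proof of \reft{main} does \emph{not} build $x$ by a ``balanced cyclic enumeration of blocks'' with spoiler stages; it constructs an auxiliary basic sequence $P$ from $Q$ via the maps $\Xi(Q,t)$, picks $x\in\NAPI{P}\cap\NAPII{P}$, and sets $y=\Psi_{P,Q}(x)$. The whole point is that by \reft{mainpsi} one has $N_{n,m,r}^Q(B,y)=N_{n,m,r}^P(B,x)+O(1)\sim P_n^{(m,r)}$ for \emph{every} block $B$ of length $m$. Hence all block counts along $\mathscr{A}_{m,r}$ are asymptotically equal (giving $y\in\RNAPI{Q}$), while the common value $P_n^{(m,r)}$ differs from $Q_n^{(m,r)}$ by a factor bounded away from $1$ (giving $y\in\NAPAbI{Q}$). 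The abnormality is produced by the mismatch between the two basic sequences, not by any digit spoilers.

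Your spoiler mechanism cannot reproduce this. If along $\mathscr{A}_{m,r}$ you force a \emph{fixed} symbol $d$ on a set of positive density, blocks of length $m$ that contain $d$ will have strictly larger asymptotic count than those that do not, so $y\notin\RNAPImr{Q}{m}{r}$ and you lose $\RNAPI{Q}$. If instead the spoilers along $\mathscr{A}_{m,r}$ have density~$0$ (which is what your diagonal schedule actually forces, since you require the total spoiler contribution along every $\mathscr{A}_{m',r'}$ to vanish), then they cannot move the limit $N_{n,m,r}^Q(B,y)/Q_n^{(m,r)}$ away from $1$, and you do not get $y\in\NAPAbI{Q}$. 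Either way the two requirements are incompatible under this scheme. The ``slight modification'' the paper has in mind keeps the $\Psi_{P,Q}$ construction intact; dropping full divergence only removes the appeal to \refl{fulldivAPdiv} that made $\NAPI{P}\cap\NAPII{P}$ nonempty, and the conclusion is correspondingly weakened from $Q$-AP-normality to $Q$-AP-ratio-normality because one can still transfer equal block counts through $\Psi_{P,Q}$ without knowing that $P_n^{(m,r)}\to\infty$ at the rate $Q_n^{(m,r)}$.
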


We recall the following definition from \cite{StrauchSampler}.
\begin{defin}
For a sequence $(x_n)$ of real numbers in $[0,1)$ a non-decreasing function $f : [0,1] \to [0,1]$ with $f(0) = 0$, $f(1) = 1$ is a {\it distribution function} of $(x_n)$ if there is an increasing sequence of natural numbers $N_1<N_2<N_3<\cdots$ such that for any $\xi \in [0,1]$ we have
\begin{align*}
\lim_{k \to \infty} \frac{\# \br{n \leq N_k : x_n \in [0,\xi) }}{N_k} = f(\xi).
\end{align*}
We define $G((x_n)_n)$ to be the set of distribution functions of the sequence $(x_n)$.
\end{defin}
In other terms $G((x_n)_n)$ can be identified with the set of weak-* limit points of the sequence of probability measures $\frac{1}{N} \sum_{n=1}^N \delta_{x_n}$. 
\begin{definition}
Given a sequence $(x_n)$ in $[0,1)$ define
\begin{align*}
\DistQ = \bigg \{y \in \mathbb{R} : G(T_{Q,n}(y) - x_n) = \{1 - \one_{\{0\}}\} \bigg \}.
\end{align*}
This is the set of real numbers with the property that the difference between $T_{Q,n}(y)$ and $x_n$ tends to $0$ except perhaps along a set of density $0$. Note this is a stronger condition than $G(x_n) = G(T_{Q,n}(y))$ but weaker than the condition that $\lim_{n \to \infty} T_{Q,n}(y) - x_n = 0$.
\end{definition}

\begin{thrm}\labt{HDmain}
If $S$ is almost closed under addition, then there exists a fully divergent, infinite in limit basic sequence $Q$ such that for any sequence $(x_n)$ in $[0,1)$
$$
\dim_H \pr{\DistQ \cap \RNAPI{Q} \cap \RNAPII{Q} \cap \NAPAbI{Q} \cap \bigcap_{k \in S} \Nk{Q}{k} \setminus \pr{ \NAPII{Q} \cup \bigcup_{k \notin S} \Nk{Q}{k} }} = 1.
$$
\end{thrm}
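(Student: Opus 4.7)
The strategy is to construct the basic sequence $Q$ once and for all (depending only on $S$), and then for each target sequence $(x_n)$ and each $\epsilon>0$ exhibit a Cantor-like subset of the intersection of Hausdorff dimension at least $1-\epsilon$; the theorem then follows by letting $\epsilon \to 0$. The construction will partition $\mathbb{N}$ into disjoint consecutive blocks $I_j=[N_j, N_{j+1})$ along which $q_n$ alternates between three regimes: (a) long \emph{normalizing} runs where $q_n$ is constant and small, on which blocks of length $k\in S$ can be forced to appear with the correct relative frequency and along which ratio statistics (in and out of arithmetic progressions) can be controlled; (b) \emph{free} positions where $q_n$ is enormous, which will absorb both the distribution condition $\DistQ$ and the entropy needed for positive Hausdorff dimension; and (c) short \emph{sabotage} segments targeted at specific $(m,r)$ and $(k,m,r)$ triples, on which planted patterns enforce $\NAPAbI{Q}$, the failure of membership in $\NAPII{Q}$, and the failure of $\Nk{Q}{k}$ for $k\notin S$. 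The triples $(m,r)$ and parameters $k\notin S$ are diagonally scheduled across the blocks so that each is hit infinitely often, but at any given scale only a density-zero fraction of positions is used for sabotage.

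Within each block the statistical bulk of the digits is chosen via transport through the map $\ppq$ from a source basic sequence $P$ on which the desired positive normality properties hold generically (by \reft{measure} and the analogous AP-statements referenced after \refd{apnormal}). By \reft{mainpsi}, block counts along any increasing sampling sequence transfer to the $Q$-expansion of $\ppq(z)$ up to $O(1)$ error, yielding simultaneously $\Nk{Q}{k}$ for each $k\in S$, as well as $\RNAPI{Q}$ and $\RNAPII{Q}$. Simultaneously, at each free position $n+1$ we constrain $E_{n+1}$ to lie in an interval of admissible integers of length $\asymp q_{n+1}/j_n$ selected so that $|T_{Q,n}(y)-x_n|<1/j_n$, where $j_n\to\infty$ slowly; since $q_{n+1}$ is chosen huge relative to $j_n$, this contributes $(1-o(1))\log q_{n+1}$ of entropy at position $n+1$. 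A standard mass distribution / Billingsley-style argument, in the spirit of those in \cite{AireyMance1, AireyManceHDDifference}, then places a Cantor set $K\subseteq\DistQ$ of Hausdorff dimension at least $1-\epsilon$ inside the target intersection, provided the blocks of types (a) and (c) together occupy only an $\epsilon$-fraction of the total log-measure $\sum_{n\leq N_j}\log q_n$ up to each block boundary.

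The ``almost closed under addition'' hypothesis enters precisely in controlling the interaction between the positive and negative normality conditions. Planting a block of length $k'\notin S$ at positions in an arithmetic progression forces agreement on all sub-blocks contained within it, and so it would \emph{a priori} perturb the counts of shorter blocks as well; the hypothesis guarantees that, along the sabotage schedule, the set of affected shorter block lengths intersects $S$ only in asymptotic density zero, so the bulk normality of orders $k\in S$ established by $\ppq$ survives. The main obstacle is exactly this simultaneous satisfaction of positive and negative conditions without destroying the dimension estimate: the positive conditions demand statistically generic digits on most of $\mathbb{N}$, while the negative conditions require planted structure along infinitely many arithmetic progressions and block-length parameters. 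The resolution combines the diagonal schedule of sabotage segments at ever-sparser density, the $O(1)$ block-count control provided by \reft{mainpsi}, and the near-maximal entropy of the free positions; the careful bookkeeping needed to prevent any two tasks in the schedule from colliding, or from conflicting with the $\DistQ$ constraint on the free positions, is where the technical effort concentrates.
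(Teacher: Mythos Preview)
Your overall architecture is close to the paper's: split $\mathbb{N}$ into positions with small $q_n$ (carrying the combinatorial information) and positions with enormous $q_n$ (carrying the $\DistQ$ constraint and the entropy for a Moran-type dimension estimate), the latter contributing negligibly to every $Q_n^{(k)}$. The paper, however, does not run three separate regimes. It first invokes \reft{main} to obtain a \emph{single} reference point $\xi$, with respect to an auxiliary monotone fully divergent $P$, already lying in $\RNAPI{P}\cap\RNAPII{P}\cap\NAPAbI{P}\cap\bigcap_{k\in S}\Nk{P}{k}\setminus(\NAPII{P}\cup\bigcup_{k\notin S}\Nk{P}{k})$. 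It then defines $Q$ by interleaving the blocks of $P$ with much longer blocks of huge values, copies the digits of $\xi$ verbatim onto the $P$-positions, and leaves an interval of roughly $q_n^{1-\epsilon_n}$ admissible digits at each free position to track $x_n$. All positive \emph{and} negative normality conditions are inherited from $\xi$; there is no sabotage schedule, and the almost-closed-under-addition hypothesis is used only inside the proof of \reft{main} (specifically \refl{freq}), not in the dimension argument at all.

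There is a genuine gap in your mechanism for the negative conditions. You propose to plant blocks of length $k'\notin S$ on sabotage segments of density zero, but if the $q_n$ on those segments are not specially tuned, the planted occurrences add only $o(Q_n^{(k')})$ to both $N_n^Q(B,y)$ and $Q_n^{(k')}$; the normalizing runs then dominate and force $N_n^Q(B,y)/Q_n^{(k')}\to 1$, so you fail to break $\Nk{Q}{k'}$. The paper avoids this entirely by perturbing the \emph{basic sequence} (the factors $c_{t,i}$ of \refl{freq}) so that $P_n^{(k)}/Q_n^{(k)}\to 1$ iff $k\in S$, and then pushing a generic point through $\ppq$. Your account of where ``almost closed under addition'' enters is also incorrect: it is not that sub-block lengths of a planted length-$k'$ block mostly avoid $S$ (those lengths are simply $1,\dots,k'-1$, which need not avoid $S$). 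In \refl{freq} the hypothesis is used to show that for $k\in S$ the count $b_{t,k}=\#\{i\le t:\ i-1\notin S,\ i+k-1\in S\}$ is $o(t)$, so that the second-order error in $\sum_i(1+\epsilon)^{\one_S(i-1)-\one_S(i+k-1)}$ is negligible; this is a statement about translates of $S$, not about sub-blocks.
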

We remark that \reft{HDmain} provides a much stronger version of the main result of \cite{AlMa}. That is, it shows that there exists a basic sequence $Q$ and a real number $x$ where $x \in \NQ \setminus \DNQ$ by setting $S=\mathbb{N}$ and letting $x_n \to 0$.

Let $(x_n)$ be a uniformly distributed sequence which is not uniformly distributed along any non-trivial arithmetic subsequence. Such sequences were constructed in \cite{AireyMance1}. Then we have the following corollary.
\begin{cor}\labc{corAlMa}
There exists a fully divergent infinite in limit basic sequence $Q$ such that
\begin{align*}
\dim_H \pr{\DNQ \cap \DNAb \cap \RNAPI{Q} \cap \RNAPII{Q} \cap \NAPAbI{Q} \cap \bigcap_{k \in S} \Nk{Q}{k} \setminus \pr{\NAPII{Q} \cup \bigcup_{k \notin S} \Nk{Q}{k}}} = 1.
\end{align*}
\end{cor}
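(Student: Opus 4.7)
The plan is to deduce the corollary by applying \reft{HDmain} to a judiciously chosen sequence $(x_n)$ in $[0,1)$ and then verifying the set inclusion $\DistQ \subseteq \DNQ \cap \DNAb$. Concretely, let $(x_n)$ be a sequence in $[0,1)$ that is uniformly distributed mod $1$ but whose arithmetic subsequences $(x_{mn+r})_{n \geq 0}$ fail to be uniformly distributed for every $m \geq 2$ and $0 \leq r < m$; such sequences are produced in \cite{AireyMance1} and are precisely what the statement of the corollary invokes. \reft{HDmain} then yields a fully divergent, infinite-in-limit basic sequence $Q$ so that
$$
\dim_H \pr{\DistQ \cap \RNAPI{Q} \cap \RNAPII{Q} \cap \NAPAbI{Q} \cap \bigcap_{k \in S}\Nk{Q}{k} \setminus \pr{\NAPII{Q} \cup \bigcup_{k \notin S}\Nk{Q}{k}}}=1.
$$
Once the inclusion $\DistQ \subseteq \DNQ \cap \DNAb$ is established, the set appearing in the corollary contains this Hausdorff-dimension-$1$ set, and since it is a subset of $\mathbb{R}$ its dimension is at most $1$, so equality follows.

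The inclusion itself is a routine unpacking of the definition of $\DistQ$. The hypothesis $G(T_{Q,n}(y)-x_n)=\br{1-\one_{\br{0}}}$ encodes exactly that $T_{Q,n}(y)-x_n \to 0$ in asymptotic density: for every $\epsilon>0$ the set $E_\epsilon:=\br{n:|T_{Q,n}(y)-x_n|>\epsilon}$ has density $0$ in $\mathbb{N}$. A standard comparison (test against continuous functions on $[0,1]$, after reducing modulo $1$) then shows that $(T_{Q,n}(y))_n$ and $(x_n)_n$ share the same collection of limiting distribution functions; uniform distribution of $(x_n)$ therefore transfers to $(T_{Q,n}(y))$, so $y \in \DNQ$. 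For any $m \geq 2$ and $0 \leq r < m$, the elementary bound
$$
\#\pr{E_\epsilon \cap \br{mn+r:0\leq n \leq N}} \leq \#\pr{E_\epsilon \cap [1, mN+r]} = o(N)
$$
shows that $E_\epsilon$ also has density $0$ inside the arithmetic progression $\br{mn+r:n \geq 0}$; consequently $T_{Q,mn+r}(y) - x_{mn+r} \to 0$ in density along $n$. The same comparison transfers the non-uniformity of $(x_{mn+r})_n$ to $(T_{Q,mn+r}(y))_n$, giving $y \in \DNAb$.

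No serious obstacle arises: the corollary is in essence the observation that the technical $\DistQ$ condition of \reft{HDmain}, specialized to the sequences built in \cite{AireyMance1}, delivers $Q$-distribution normality together with AP $Q$-distribution abnormality for free. The only subtlety worth isolating is the transfer of the density-$0$ exceptional set from $\mathbb{N}$ to an arithmetic progression, which is handled by the counting inequality above.
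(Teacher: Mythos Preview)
Your proposal is correct and follows exactly the route the paper intends: choose $(x_n)$ as in \cite{AireyMance1}, apply \reft{HDmain}, and observe that $\DistQ \subseteq \DNQ \cap \DNAb$ for this choice. The paper states the corollary immediately after introducing such $(x_n)$ and gives no separate proof, so your explicit verification of the inclusion (via the density-$0$ exceptional set and its restriction to arithmetic progressions) simply fills in the detail the paper leaves to the reader.
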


\begin{thrm}\labt{comp}
If $S$ is almost closed under addition, then there exists a computable basic sequence $Q$ which is infinite in limit such that if $k \mapsto  \one_S(k)$ is computable then there is a computable real number in 
$$
\RNAPI{Q} \cap \RNAPII{Q} \cap \NAPAbI{Q} \cap \bigcap_{k \in S} \Nk{Q}{k} \setminus \pr{\NAPII{Q} \cup \bigcup_{k \notin S} \Nk{Q}{k}}.
$$
\end{thrm}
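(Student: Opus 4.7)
The plan is to revisit the construction that proves \reft{main} (or \reft{HDmain}) and observe that, under the additional computability hypothesis on $\one_S$, every choice made at every stage can be carried out by a total recursive function. Since the paper already notes that if $Q=(q_n)$ and $(E_n)$ are computable sequences, then $\sum_{n} E_n/(q_1\cdots q_n)$ is a computable real number, it is enough to arrange the construction so that both $(q_n)$ and the digit sequence $(E_n)$ of our distinguished number are computed by explicit primitive-recursive procedures.

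Concretely, I would organize the construction in stages indexed by $n$, where in stage $n$ we commit to a finite block of values $q_{n_k+1},\ldots,q_{n_{k+1}}$ and a corresponding finite block of digits $E_{n_k+1},\ldots,E_{n_{k+1}}$. The stage-$n$ parameters (growth of the $q_j$'s, lengths of sub-blocks on which we write ``normal'' data versus ``abnormal'' data, sub-blocks used to pollute arithmetic progressions, etc.) depend only on finitely many integers that can be enumerated effectively. The conditions we must verify are of the form ``a certain ratio of block counts lies within a specified tolerance,'' and such ratios can be computed exactly from the finite data produced so far. To install membership in $\bigcap_{k\in S}\Nk{Q}{k}$ and exclusion from $\Nk{Q}{k}$ for $k\notin S$, the construction consults $\one_S(k)$ for the relevant $k\le n$; by hypothesis this is computable, so the branching in the algorithm is decidable. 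The transfer of normality properties through $\psi_{P,Q}$ used in \reft{mainpsi} is likewise algorithmic: given the computable basic sequences and a computable digit sequence for $x$, the digits of $\psi_{P,Q}(x)$ are obtained by a computable greedy procedure.

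The main obstacle I anticipate is \emph{effectivizing the scheduling}: the proof of \reft{main} almost certainly uses an ``as $n\to\infty$'' argument to guarantee, simultaneously, the convergence of infinitely many block-frequency ratios (one for each pair $(B,k)$, each progression $(m,r)$, each distribution condition, etc.). In a non-effective proof one is free to pick block lengths ``sufficiently large'' at each stage; in the computable version I must replace every such ``sufficiently large'' by a concrete, computable threshold. The standard way around this is to enumerate the countably many requirements $(R_i)_{i\in\mathbb{N}}$ in a fixed computable order, assign to requirement $R_i$ a stage $n(i)$ activated only from step $n(i)$ onwards, and within stage $n$ attend to the finitely many requirements $R_1,\ldots,R_n$ by choosing block lengths greater than explicit recursive bounds derived from the current partial frequencies. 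Since the ``almost closed under addition'' hypothesis only asks for density-zero exceptions, and since \reft{mainpsi} gives an $O(1)$ error that is automatically absorbed by any diverging denominator, this finite-injury-style scheduling suffices.

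Once the stage-by-stage algorithm is written down, the final verifications are immediate: $(q_n)$ and $(E_n)$ are computed by total recursive functions, hence $Q$ is a computable basic sequence and $x=\sum_n E_n/(q_1\cdots q_n)$ is a computable real; the asymptotic statements that place $x$ in the target intersection follow from the same estimates used in the non-effective proof, now applied to the explicit thresholds produced by the algorithm. This yields a computable witness in
$$
\RNAPI{Q}\cap \RNAPII{Q}\cap \NAPAbI{Q}\cap \bigcap_{k\in S}\Nk{Q}{k}\setminus\Bigl(\NAPII{Q}\cup\bigcup_{k\notin S}\Nk{Q}{k}\Bigr),
$$
as required.
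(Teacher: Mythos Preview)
Your plan is in the right spirit---make every step of the \reft{main} construction effective---but the paper's own argument is considerably shorter and avoids the finite-injury scheduling you anticipate as the main obstacle. The paper simply \emph{writes down} an explicit starting sequence, taking $\ell_i=0$ for $i<6$ and $\ell_i=3^{i!}(i+1)^{i!\,i}$ for $i\ge 6$, together with a rational $\epsilon$. With this concrete input, every auxiliary sequence appearing in the proof of \reft{main}---$(L_n)$, $(\alpha(n))$, $(t_i)$, $(\kappa_i)$, $(K_i)$, $(i(n))$, $(j(n))$, and $(\Xi(\,\cdot\,,t_{i(n)})_n)$---is defined by conditions that are decidable from finite initial data (the definition of $\kappa_i$ is a bounded search over decidable predicates once the $\ell_i$ are explicit), so the resulting basic sequence is computable with no need for ``sufficiently large'' thresholds or priority arguments. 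The dependence on $\one_S$ enters only through the $c_{t,i}$, exactly as you note.

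The second place where your route diverges is in producing the computable normal source number. You propose to build the digits $(E_n)$ by a stage-by-stage commitment satisfying all the block-frequency requirements, which would essentially re-prove, from scratch, the existence of a computable $x\in\N{P}\cap\NAPI{P}\cap\NAPII{P}$. The paper instead \emph{cites} this: Theorem~1.8 of \cite{ppq2} already supplies such an $x$ with computable digit sequence. Then $\Psi_{P,Q}(x)$ has digit sequence $\min(E_n,q_n-1)$, which is trivially computable, and the membership in the target set follows from the estimates already established in \reft{main}. So your approach is not wrong, but it re-derives a nontrivial black box that the paper simply imports; what you gain is self-containment, what you lose is brevity and the need to actually carry out the priority construction you only sketch.
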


\section{Sketch of proof and general discussion}
In this section, we wish to sketch an outline of the basic construction. We feel that many of the ideas used in the proofs will be easily obscured in computations and feel that a section informally outlining some of the ideas would be helpful. Moreover, we feel that the techniques used here might prove to be helpful when examing problems relating to other non-autonomous dynamical systems. Thus, we start with a basic problem to illustrate some of the ideas. We remark that we do not sketch how the results on Hausdorff dimension are proven. The techniques used are a combination of the ideas outlined here and those used in \cite{AireyManceHDDifference}.

We want a basic sequence $Q$ and a real number $y$ which is $Q$-normal of orders $1$ and $3$ but not $Q$-normal of order $2$. Additionally we wish for $y$ to not be $Q$-AP-normal of types $I$ or $II$. We start with a basic sequence $P$ which is infinite in limit, monotone, and slowly growing. By \cite{Mance4} there exists a real number $x \in \mathcal{N}(P) \cap \NAPI{P} \cap \NAPII{P}$. Consider $n \equiv 1 \bmod{6}$ sufficiently large so that $p_n = p_m = \alpha$ for $m$ close to $n$. The expected number of occurrences of a block of small digits $B$ of length $1$ between positions $n$ and $n+5$ in $x$ is
$$
\frac{1}{p_n} + \frac{1}{p_{n+1}} + \cdots + \frac{1}{p_{n+5}} = \frac{6}{\alpha}.
$$
Similarly if $B$ is a block of small digits of length $2$ the expected number of occurrences between $n$ and $n+5$ in $x$ is 
$$
\frac{1}{p_n p_{n+1}} + \frac{1}{p_{n+1} p_{n+2}} + \cdots + \frac{1}{p_{n+5} p_{n+6}} = \frac{6}{\alpha^2}
$$
and for blocks of length $3$ the expected number is $\frac{6}{\alpha^3}$. We define a new basic sequence $Q= (q_n)$ from $P$ by
$$
q_n = \begin{cases}
\max\{\lfloor c_{r}^{-1} p_n \rfloor, 2\} & \text{ if } n \equiv r \bmod{6} \text{ for } 1 \leq r \leq 3 \\
2^n p_n & \text{ otherwise}
\end{cases}
$$
for $c_1, c_2, c_3$ we will define soon.

Now the expected number of occurrences of a block $B$ of length $1$ in a $Q$-normal number between positions $n$ and $n+5$ is
\begin{align*}
\frac{1}{q_n} + \frac{1}{q_{n+1}} + \ldots + \frac{1}{q_{n+5}} &\approx \frac{c_1}{p_n} + \frac{c_2}{p_{n+1}} + \frac{c_3}{p_{n+2}} + \text{ lower order terms}\\
& \approx \frac{c_1 + c_2 + c_3}{\alpha}.
\end{align*}
For blocks of length $2$ the expected number of occurrences is
\begin{align*}
\frac{1}{q_n q_{n+1}} + \frac{1}{q_{n+1}q_{n+2}} + \ldots + \frac{1}{q_{n+5} q_{n+6}} &\approx \frac{c_1 c_2}{p_n p_{n+1}} + \frac{c_2 c_3}{p_{n+1} p_{n+2}} + \text{ lower order terms}\\
 &\approx \frac{c_1 c_2 + c_2 c_3}{\alpha^2}.
\end{align*}
For blocks of length $3$ the expected number is approximately
$$
\frac{c_1 c_2 c_3}{\alpha^3}.
$$

Consider $y = \Psi_{P,Q}(x)$. Then by \reft{mainpsi} $N_n^Q(B,y) = N_n^P(B,x) + O(1)$. Since $x$ is $P$-normal of all orders and both $P$ and $Q$ are fully divergent we have $y$ is $Q$-normal of order $k$ if and only if $\lim_{n \to \infty} \frac{P_n^{(k)}}{Q_n^{(k)}} = 1$. The basic sequence $P$ is slowly growing so
$$
\lim_{n \to \infty} \frac{P_n^{(1)}}{Q_n^{(1)}} = \lim_{n \to \infty} \frac{\frac{1}{p_n} + \frac{1}{p_{n+1} } + \cdots + \frac{1}{p_{n+5} }}{\frac{1}{q_n } + \frac{1}{q_{n+1} } + \cdots + \frac{1}{q_{n+5}}} = \frac{6}{c_1 + c_2 + c_3}.
$$
Similarly
\begin{align*}
\lim_{n \to \infty} \frac{P_n^{(2)}}{Q_n^{(2)}} & = \frac{6}{c_1 c_2 + c_2 c_3} \\
\lim_{n \to \infty} \frac{P_n^{(3)}}{Q_n^{(3)}} & = \frac{6}{c_1 c_2 c_3}.
\end{align*}
We thus want to solve
\begin{align*}
c_1+c_2+c_3&=6\\
c_1c_2+c_2c_3&\neq 6\\
c_1c_2c_3&=6
\end{align*}
An example of a particularly simple solution is $c_1 = 3$, $c_2 = 2$, $c_3 = 1$ which gives
\begin{align*}
c_1 + c_2 + c_3 &= 6\\
c_1c_2+c_2c_3 &= 8\\
c_1c_2c_3&=6.
\end{align*}

The expected number of occurrences of a block of length $2$ on odd positions in $x$ between positions $n$ and $n+5$ is
\begin{align*}
\frac{1}{p_n p_{n+1}} + \frac{1}{p_{n+2} p_{n+3}} + \frac{1}{p_{n+4} p_{n+5}} = \frac{3}{\alpha^2}.
\end{align*}
On the other hand the expected number of occurrences in $y$ is
\begin{align*}
\frac{1}{q_n q_{n+1}} + \frac{1}{q_{n+2} q_{n+3}} + \frac{1}{q_{n+4} q_{n+5}} \approx \frac{c_1 c_2}{\alpha^2} + \text{ lower order terms} \approx \frac{6}{\alpha^2}.
\end{align*}
By the same reasoning as above $y$ is not $Q$-AP normal of type $I$. A similar computation shows that $y$ is not $Q$-AP normal of type $II$.

Let $S \subset \mathbb{N}$ be arbitrary. In general we want to construct 
$$
y \in \bigcap_{k \in S} \Nk{Q}{k} \setminus \bigcup_{k \notin S} \Nk{Q}{k}.
$$ 
To do this the window from $n$ to $n+5$ will be replaced with a window from $n$ to $n+2t-1$ with $t$ slowly increasing in $n$ so the general system we would like to solve is
\begin{align*}
c_1 + c_2 + \ldots + c_{t-1} + c_t &= (2 + \epsilon_1 )t \\
c_1 c_2 + c_2 c_3 + \ldots + c_{t-2} c_{t-1} + c_{t-1} c_t &= (2+\epsilon_2) t \\
c_1c_2c_3+c_2c_3c_4+\ldots+c_{t-2}c_{t-1}c_t&=(2+\epsilon_3)t\\
& \vdots\\
c_1c_2\cdots c_{t-1}+c_2c_3\cdots c_t&=(2+\epsilon_{t-1})t\\
 c_1 c_2 \cdots c_{t-1} c_t &= (2+\epsilon_t) t.
\end{align*}
where the $\epsilon_k$'s are chosen to be small and non-zero if and only if $k \notin S$. By the same reasoning as before
$$
\lim_{n \to \infty} \frac{P_n^{(k)}}{Q_n^{(k)}} = \frac{2t}{c_1 c_2 \cdots c_k + c_2 c_3 \cdots c_{k+1} + \cdots + c_{t-k+1} c_{t-k+2} \cdots c_t}
$$
which is $1$ if and only if $k \in S$. This system is a perturbation of the system when $\epsilon_k = 0$ for all $k$ so we would like to understand the unperturbed system. Set
$$
\mathscr{R}_t:=[t,t+1] \times \bra{1, 1 + \frac{1}{t-1}}^{t-1}.
$$
We conjecture that for all $t \in \mathbb{N}$ there is a solution $(c_1, c_2, \cdots, c_t)  \in \mathscr{R}_t$. An implementation of Newton's method written in Java by the authors suggests that this holds for all $t<110$.

The following may be solved by hand.
\begin{align*}
c_1+c_2+c_3&=6\\
c_1c_2+c_2c_3&=6\\
c_1c_2c_3&=6.
\end{align*}
Let $D=3 - \sqrt{3}$ be a root of $p_3(x):=6- 6x+x^2$. Then 
\begin{align*}
c_1&= \sqrt{3} + \frac{1}{2} D + \frac{1}{2} \sqrt{D^2 - 8D + 12}\approx 3.29663;\\
c_2&=D \approx 1.26795;\\
c_3&=\sqrt{3} + \frac{1}{2}D-\frac{1}{2} \sqrt{D^2 - 8D + 12}\approx 1.43542.
\end{align*}
Clearly, $(c_1,c_2,c_3) \in \mathscr{R}_3$.

However, the situation is more complicated already when $t=4$. We solved the following system with Mathematica.
\begin{align*}
c_1 + c_2 + c_3 + c_4 &= 8\\
c_1 c_2 + c_2 c_3 + c_3 c_4 &= 8\\
c_1 c_2 c_3 + c_2 c_3 c_4 &= 8\\
c_1 c_2 c_3 c_4 &= 8.
\end{align*}
Let $D$ be the root of 
$$
p_4(x):=-512 + 7680 x^2 - 21248 x^3 + 27456 x^4 - 20544 x^5 + 9376 x^6 -  2568 x^7 + 400 x^8 - 32 x^9 + x^{10}
$$
 close to $1.15177.$
 
Then
 \begin{align*}
c_1 = &-\frac{36284}{837} -\frac{18017}{837} D + \frac{494839}{837} D^2 - \frac{1047643}{837} D^3 + \frac{8278819}{6696} D^4 \\
& - \frac{4579775}{6696} D^5 + \frac{238999}{1116} D^6 -  \frac{108529}{2976} D^7 +  \frac{41431}{13392} D^8 - \frac{1345}{13392} D^9 \approx 4.30783;\\
c_2 = & D \approx 1.15177;\\
c_3 = & \frac{9352}{837} -\frac{10469}{837} D-\frac{25058}{837} D^2 +\frac{71483}{837} D^3-\frac{147347}{1674}D^4\\
&+\frac{39620}{837} D^5-\frac{31525}{2232} D^6+\frac{214}{93} D^7-\frac{10103 }{53568}D^8 + \frac{5}{837} D^9 \approx 1.23808; \\
c_4 = & \frac{33628}{837}+\frac{27649}{837} D -\frac{469781}{837} D^2+\frac{976160}{837} D^3 -\frac{7689431 }{6696} D^4\\
&+\frac{4262815 }{6696} D^5 -\frac{446473 }{2232} D^6+\frac{101681}{2976} D^7-\frac{155621 }{53568}D^8 + \frac{1265 }{13392}D^9 \approx 1.30231.
 \end{align*}
Thus, $(c_1,c_2,c_3,c_4) \in \mathscr{R}_4$. 

Let $K$ be the splitting field of the polynomial $p_4(x)$ over $F=\mathbb{R}$.
 We used the online implementation of MAGMA to compute the Galois group of $K / F$. Thus,
$$
\hbox{Gal}(K / F) \simeq G,
$$
where $G$ is the subgroup of order $1920$ of the symmetric group $S_{10}$ generated by the set of four permutations
$$
\{ (1, 4, 3, 6, 7)(2, 10, 5, 8, 9), 
    (1, 10, 5, 8, 9)(2, 4, 3, 6, 7),
    (1, 4)(2, 10),
    (1, 10)(2, 4)\}.
$$
It is interesting that this group is a proper subgroup of $S_{10}$, but it is still not a solvable group. Thus, the roots of $p_4(x)$ may not be expressed in radicals. Since $c_2=D$ is a root of $p_4(x)$, the point $(c_1,c_2,c_3,c_4)$ may not be expressed in terms of radicals.

We remark that we can solve the system by iteratively solving each line for one of the variables and replacing every occurrence of this variable in the next lines with the expression obtained from the current line. This terminates with a polynomial in one of the variables. A solution of the system then corresponds to a root of this final polynomial with the other variables given by polynomials evaluated at this root. However these polynomials grow rapidly as $t$ increases and give little insight about the regions where these solutions lie. Thus, we feel that a purely algebraic approach is unlikely to give any useful information.


We may prove weaker versions of the theorems we desire to prove without having bounds on the solutions of these systems of equations. For example taking $(c_1, c_2, \cdots, c_t) = (t,1,\cdots,1)$ yields
$$
\lim_{t \to \infty} \frac{c_1 c_2 \cdots c_k + c_2 c_3 \cdots c_{k+1} + \cdots + c_{t-k+1} c_{t-k+2} \cdots c_t}{2t} = 1.
$$
We perturb this approximate solution by multiplying $c_i$ by a factor of $1+\epsilon$ or $\frac{1}{1+\epsilon}$ so that the first term $c_1 \cdots c_k$ is approximately $t$ or $(1+\epsilon)t$. The contribution from the rest of the terms is approximately $t$ but for general $S \subseteq \mathbb{N}$ there is an unavoidable error. However if we restrict to subsets $S$ which are almost closed under addition this error becomes negligible. While we do not achieve the desired result for every subset of $\mathbb{N}$ our results do hold for this restricted class of subsets. We believe that \reft{main} should hold for every subset of $\mathbb{N}$ and the restriction we have is an artefact of the approximation we use. One additional loss of our current approximation is that we are unable to obtain numbers that are in $\NQ \cap \NAPAbII{Q}$, but only in $\NQ \setminus \NAPII{Q}$. This will no longer be a problem if we no longer have to use this approximation. The error introduced by our approximation will appear in the proof of \refl{freq}.

We also wish to remark that it is still possible to prove stronger versions of our theorems by only considering underdetermined versions of our system. And some solutions outside of the region $\mathscr{R}_t$ may also be useful.

\section{Proof of results}

To prove \reft{diffbary} we will construct a sequence of $k$-step Markov measures on $b^\mathbb{N}$ with stationary distribution uniform over $b^k$ and with entropy converging to $\log b$. The set of generic points for these measures will be contained in $\Nk{b}{k}\setminus \Nk{b}{k+1}$ and the Hausdorff dimension of the set of generic points is equal to the entropy of the measure divided by $\log b$ by \cite{Colebrook}. Convergence of the entropy to $\log b$ then completes the proof.
\begin{proof}[Proof of \reft{diffbary}]
 Define the matrix $P_n : b^k \times b^k \to [0,1]$ such that
\begin{align*}
&P_n([0^k],[0^k]) = \frac{1+\frac{1}{n}}{b} &  &P_n([0^k],[0^{k-1} 1]) = \frac{1- \frac{1}{n}}{b} \\
&P_n([1 0^{k-1}],[0^k]) = \frac{1- \frac{1}{n}}{b} &  &P_n([1 0^{k-1}],[0^{k-1} 1]) = \frac{1+\frac{1}{n}}{b} \\
&P_n(B, B') = \begin{cases}
\frac{1}{b} \text{ if } b_2 = b'_1, \cdots, b_k = b'_{k-1} \\
0 \text{ otherwise}
\end{cases}.
\end{align*}
Then
\begin{align*}
\pr{\frac{1}{b^k}, \cdots , \frac{1}{b^k}} \times P_n = \pr{\frac{1}{b^k}, \cdots, \frac{1}{b^k}}
\end{align*}
and for each $B \in b^k$
\begin{align*}
\sum_{B' \in b^k} P_n(B,B') = 1.
\end{align*}
Thus the measure on $(b^k)^\mathbb{N}$ defined by
$$
\mu_n[B_1, \cdots, B_m] = \frac{1}{b^k} \prod_{i=1}^{m-1} P_n(B_i, B_{i+1})
$$
is shift invariant. Now by the specific form of $P_n$ the system $\pr{(b^k)^\mathbb{N}, \mu_n}$ is isomorphic to the system $\pr{b^\mathbb{N}, \nu_n}$ under the map
$$
\Phi(x)(m) = x(m)_1.
$$
Let $S_n \subseteq b^\mathbb{N}$ be the set of generic points for the measure $\nu_n$. Then $\nu_n[B] = b^{-k}$ for every block $B$ of length $k$ but $\nu_n[0^{k+1}] = \frac{1+\frac{1}{n}}{b^{k+1}}$. Thus $S_n \subseteq \Nk{b}{k} \setminus \Nk{b}{k+1}$. Furthermore by Theorem 7.2 in \cite{Colebrook}
\begin{align*}
\dim_H(S_n) &\geq \frac{h(\nu_n)}{\log b}= \frac{h(\mu_n)}{\log b} \\
&\geq \frac{- \sum_{B}\mu_n(B) \sum_{B'} P_n(B,B') \log P_n(B, B')}{ \log b}.
\end{align*}
By the continuity of the map $x \mapsto x \log x$
$$
\lim_{n \to \infty} h(\mu_n) = \log b.
$$
Thus
\begin{align*}
\dim_H(\Nk{b}{k} \setminus \Nk{b}{k+1}) \geq \lim_{n \to \infty} \dim_H(S_n) = 1.
\end{align*}
\end{proof}

Fix a set $S$ which is almost closed under addition and $\epsilon > 0$. Define 
$$
c_{t,i}:=\left\{ \begin{array}{ll}
t\cdot (1+\epsilon)^{1-\oneNSk{1}} & \textrm{if $i=1$}\\
(1+\epsilon)^{\oneNSk{i-1} - \oneNSk{i}} & \textrm{if $i>1$}.
\end{array} \right. .
$$
\begin{lem}\labl{freq}
If $S$ is almost closed under addition then
$$
\lim_{t \to \infty} \frac{\sum_{i=1}^{t-k+1} c_{t,i} c_{t,i+1} \cdots c_{t,i+k-1} }{2 t} = 1
$$
if and only if $k \in S$.

Furthermore, for any $m \geq 2$, $1 \leq r \leq m$
$$
\lim_{t \to \infty} \frac{m}{2t} \sum_{\substack{1 \leq i \leq t-m+1 \\ i \equiv r \bmod m}} c_{t,i} c_{t,i+1} \cdots c_{t,i+m-1} \neq 1
$$
and there is an integer $k$ such that
$$
\lim_{t \to \infty} \frac{m}{2t} \sum_{i=r}^{\frac{t-k+1}{m}} c_{t,i} c_{t,m+i} \cdots c_{t,km+i} \neq 1.
$$
\end{lem}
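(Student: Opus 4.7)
The plan is to exploit a telescoping identity hidden in the definition of $c_{t,i}$. For $i\geq 2$, consecutive factors cancel exponents:
$$
c_{t,i}\,c_{t,i+1}\cdots c_{t,i+k-1} \;=\; (1+\epsilon)^{\one_S(i-1) - \one_S(i+k-1)},
$$
while the initial window retains the surviving factor of $t$:
$$
c_{t,1}\,c_{t,2}\cdots c_{t,k} \;=\; t(1+\epsilon)^{1-\one_S(k)}.
$$
Each tail product lies in $\{(1+\epsilon)^{-1},1,1+\epsilon\}$ and equals $1$ precisely when $i-1$ and $i-1+k$ agree under $\one_S$.

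For the first claim I will split the sum $S_t$ as the initial term plus the telescoped tail. When $k\in S$, the hypothesis that $(S-k)\cap(\mathbb{N}\setminus S)$ has density zero, together with the simple counting identity $\bigl||S\cap[1,N]|-|(S-k)\cap[1,N]|\bigr|=O(k)$, forces both $(S-k)\setminus S$ and $S\setminus(S-k)$ to have density zero. Hence all but $o(t)$ tail terms equal $1$, the tail contributes $t+o(t)$, and together with the initial $t\cdot 1$ we get $S_t/(2t)\to 1$. When $k\notin S$, the initial term becomes $t(1+\epsilon)$ and the tail is bounded below by $(t-k)(1+\epsilon)^{-1}$; the elementary identity
$$
\frac{1+\epsilon}{2}+\frac{1}{2(1+\epsilon)} \;=\; 1+\frac{\epsilon^{2}}{2(1+\epsilon)}
$$
then forces $\liminf_{t\to\infty} S_t/(2t) \geq 1+\epsilon^{2}/(2(1+\epsilon))>1$, so no limit equal to $1$ is possible.

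For the second claim, the sum has $\sim t/m$ terms, each a length-$m$ telescoped product. When $r\geq 2$, no initial term appears and each product is at most $1+\epsilon$, yielding $\tfrac{m}{2t}\cdot(\text{sum})\leq(1+\epsilon)/2<1$ for small $\epsilon$; the limit cannot be $1$. When $r=1$, the initial contribution $t(1+\epsilon)^{1-\one_S(m)}$ alone already makes $\tfrac{m}{2t}\cdot(\text{sum})\geq m/2\geq 1$, while the strictly positive tail pushes us strictly above. For the third claim I take $k=1$, so the products are two-factor $c_{t,i}c_{t,m+i}$: for $r\geq 2$ each is at most $(1+\epsilon)^{2}$, giving $\tfrac{m}{2t}\cdot(\text{sum})\leq(1+\epsilon)^{2}/2<1$ for $\epsilon<\sqrt{2}-1$; for $r=1$ the $i=1$ product is of order $t$ and forces the ratio strictly above $1$.

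The main obstacle I anticipate is the converse direction of the first claim, where a priori the bounded tail could conspire to cancel the excess $(1+\epsilon)/2-1/2$ supplied by the initial term; the quadratic-in-$\epsilon$ margin from the elementary identity rules this out uniformly in the structure of $S$. The remaining bounds reduce to counting terms and invoking the density-zero hypothesis, and are routine.
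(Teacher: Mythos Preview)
Your argument is correct and rests on the same telescoping identity the paper uses: for $i\geq 2$ the product $c_{t,i}\cdots c_{t,i+k-1}$ collapses to $(1+\epsilon)^{\one_S(i-1)-\one_S(i+k-1)}$, while the $i=1$ window carries the factor $t(1+\epsilon)^{1-\one_S(k)}$. The differences from the paper's proof are minor. For $k\in S$, the paper writes each tail term exactly as $1+(\one_S(i-1)-\one_S(i+k-1))\epsilon+\min\{0,\cdot\}\,\epsilon^2/(1+\epsilon)$, so the linear part telescopes to an $O(k)$ quantity and only the $-1$-exponent count $b_{t,k}$ needs the density-zero hypothesis; you instead argue that \emph{both} $\pm 1$ exponent cases have density zero, which you correctly deduce from the counting identity $|(S-k)\cap[1,N]|=|S\cap[1,N]|+O(k)$. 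For $k\notin S$, the paper's exact expansion yields the linear margin $(2+\epsilon)t+o(t)$, while your crude tail bound $(t-k)(1+\epsilon)^{-1}$ gives only the quadratic margin $1+\epsilon^2/(2(1+\epsilon))$; either suffices. For the second and third claims your case split $r=1$ versus $r\geq 2$ matches the paper's, and your choice of $k=1$ in the third claim works as well as the paper's $k=2$, though in the $r=1$ case you should make explicit that the tail contributes $\sim \tfrac{1}{2}(1+\epsilon)^{-2}$ in the limit (so that for $m=2$ the total is bounded strictly above $1$, not merely that each finite sum exceeds the initial term).
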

\begin{proof}
Write 
\begin{align*}
a_{t,k}  &= \sum_{i=2}^{t-k+1} \oneNSk{i-1} - \oneNSk{i+k-1};\\
b_{t,k} &= \# \{2 \leq i \leq t-k+1 : \oneNSk{i-1} - \oneNSk{i+k-1} = -1\}.
\end{align*}
Note that $|a_{t,k}| \leq 2k$
since it is a telescoping sum and $\lim_{t \to \infty} \frac{b_{t,k}}{t} = 0$ for $k \in S$
by the assumptions on $S$.
Then we have for any $k \in \mathbb{N}$
\begin{align*}
&\sum_{i=1}^{t-k+1} c_{t,i} c_{t,i+1} \cdots c_{t,i+k-1} \\
&=(1+\epsilon)^{1-\oneNSk{1} + \pr{\oneNSk{1} - \oneNSk{2}} + \cdots + \pr{\oneNSk{k-1} - \oneNSk{k}}}\cdot t\\ 
&\ \ \  +\sum_{i=2}^{t-k+1} (1+\epsilon)^{ \pr{\oneNSk{i-1} - \oneNSk{i} } + \cdots + \pr{\oneNSk{i+k-2} - \oneNSk{i+k-1}}}\\
&= (1+\epsilon)^{1-\oneNSk{k}}\cdot t + \sum_{i=2}^{t-k+1} (1+\epsilon)^{\oneNSk{i-1} - \oneNSk{i+k-1}}\\
& = (1+ \one_{\mathbb{N} \setminus S}(k) \epsilon) t + \sum_{i=2}^{t-k+1} 1 + (\oneNSk{i-1} - \oneNSk{i+k-1}) \epsilon\\
& \ \ \ + \min\{0, \oneNSk{i-1} - \oneNSk{i+k-1}\} \frac{\epsilon^2}{1+\epsilon}  \\
& = (1+ \one_{\mathbb{N}\setminus S}(k) \epsilon) t + t-k + a_{t,k} \epsilon  + b_{t,k} \frac{\epsilon^2}{1+\epsilon}
\end{align*}
where we have used that $\frac{1}{1+\epsilon} = 1-\epsilon + \frac{\epsilon^2}{1+\epsilon}.$
Now if $k \in S$ this is simply
\begin{align*}
2t + o(t)
\end{align*}
and if $k \notin S$ this is bounded below by
\begin{align*}
(2+\epsilon)t + o(t)
\end{align*}
since $b_{t,k} \geq 0$. Thus the first claim holds.

For the next two claims consider first $r = 1$. We find by the same estimates
\begin{align*}
\sum_{\substack{1 \leq i \leq t-m+1 \\ i \equiv 1 \bmod m}} c_{t,i} c_{t,i+1} \cdots c_{t,i+m-1} &= t \cdot (1+\epsilon)^{1-\oneNSk{m}} + \sum_{\substack{m+1 \leq i \leq t-m+1 \\ i \equiv 1 \bmod m}} (1+\epsilon)^{\oneNSk{i-1} - \oneNSk{i+m-1}} \\
& \geq t \cdot(1 + \frac{1}{(1+\epsilon)m} + \one_{\mathbb{N} \setminus S}(m) \epsilon) + o(t).
\end{align*}
For $r \neq 1$ the same estimates give
\begin{align*}
\sum_{\substack{1 \leq i \leq t-m+1 \\ i \equiv r \bmod m}} c_{t,i} c_{t,i+1} \cdots c_{t,i+m-1} & = \sum_{\substack{1 \leq i \leq t-m+1 \\ i \equiv r \bmod m}} (1+\epsilon)^{\oneNSk{i-1} - \oneNSk{i+m-1}}\\
&\leq (1+\epsilon)\frac{t}{m} + o(t).
\end{align*}
Similarly
\begin{align*}
\sum_{i=1}^{\frac{t-k+1}{m}} c_{t,1+mi} c_{t,1+m(i+1)} \cdots c_{t,1+m(i+k-1)} \geq (1+\epsilon)^{-k} t \pr{1 + \frac{1}{m}} + o(t)
\end{align*}
and for $r \neq 1$
\begin{align*}
\sum_{i=r}^{\frac{t-k+1}{m}} c_{t,r+mi} c_{t,r+m(i+1)} \cdots c_{t,1+m(i+k-1)} \leq (1+\epsilon)^k \frac{t}{m} + o(t).
\end{align*}
We can take $\epsilon$ sufficiently small and $k= 2$ to complete the proof.
\end{proof}

We need the following lemma.

\begin{lem}\labl{fulldivAPdiv}
If $Q$ is fully divergent and non-decreasing then $Q$ is $(m,r)$-divergent of type $I$ and $(k,m,r)$-divergent of type $II$ for every $k,m,r$.
\end{lem}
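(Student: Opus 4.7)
The plan is to reduce divergence of the arithmetic-progression sums to divergence of the ordinary sums $Q_n^{(K)}$ for appropriate $K$, using the non-decreasing hypothesis to compare residue classes term-by-term.

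For type I, I would first partition the ordinary sum $Q_n^{(m)}$ by the residue of the starting index modulo $m$, obtaining
\begin{equation*}
Q_n^{(m)} = \sum_{r=0}^{m-1} Q_n^{(m,r)} + O(1),
\end{equation*}
where the $O(1)$ absorbs finitely many boundary terms independent of $n$. Since $Q$ is $m$-divergent, the left side tends to infinity, so at least one residue class $r^{*}$ gives a divergent subsum. To propagate divergence to every residue class, I would establish the sandwich
\begin{equation*}
\frac{1}{q_{m(j+1)+r}\cdots q_{m(j+1)+r+m-1}} \le \frac{1}{q_{mj+r'}\cdots q_{mj+r'+m-1}} \le \frac{1}{q_{mj+r}\cdots q_{mj+r+m-1}}
\end{equation*}
for $0 \le r < r' \le m-1$; each inequality is immediate from non-decreasing $Q$ applied factor-by-factor, using that $mj+r \le mj+r' \le m(j+1)+r$ holds pointwise in the $m$ offsets. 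Summing over $j$ sandwiches $Q_n^{(m,r')}$ between a one-step shift of $Q_n^{(m,r)}$ and $Q_n^{(m,r)}$ itself, so divergence of any one $Q_n^{(m,r^{*})}$ forces divergence of every $Q_n^{(m,r)}$.

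For type II the strategy is parallel. First, since every $q_j \ge 2 \ge 1$, dropping from the denominator of a summand of $Q_n^{(m(k-1)+1)}$ every factor except those at positions $\ell, \ell+m, \ldots, \ell+m(k-1)$ only enlarges it, and after partitioning $\ell$ by its residue mod $m$ one obtains
\begin{equation*}
Q_n^{(m(k-1)+1)} \le \sum_{\ell} \frac{1}{q_{\ell}\, q_{\ell+m}\cdots q_{\ell+m(k-1)}} = \sum_{r=0}^{m-1} (\Lambda_{\mathscr{A}_{m,r}}(Q))_n^{(k)} + O(1).
\end{equation*}
Full divergence of $Q$ makes the left side unbounded, so at least one type II sum diverges. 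Second, the same monotonicity sandwich, now using $q_{m(j+l)+r} \le q_{m(j+l)+r'} \le q_{m(j+l+1)+r}$ termwise for $0 \le r \le r' \le m-1$, shows that all $(\Lambda_{\mathscr{A}_{m,r}}(Q))_n^{(k)}$ are mutually comparable up to a one-step index shift, so divergence propagates to every residue class.

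The one point requiring care is making the residue-class comparison work in both directions. The inequality $q_{mj+r} \le q_{mj+r'}$ is free, but its reverse counterpart costs a shift by a full period; this shift is harmless for divergence (which is insensitive to finitely many initial terms) and is the only place where the non-decreasing hypothesis is genuinely needed. No serious obstacle is expected; the argument is essentially bookkeeping around the monotonicity applied to the two standard reductions to $Q_n^{(m)}$ and $Q_n^{(m(k-1)+1)}$.
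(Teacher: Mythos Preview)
Your argument is correct and follows a genuinely different route from the paper. The paper first asserts the growth estimate $q_n=o(n^a)$ for every $a>0$ and then bounds each arithmetic-progression sum below termwise by $\sum_{i\equiv r\bmod m} i^{-ak}$, which diverges once $a<1/k$. You instead decompose $Q_n^{(m)}$ (for type~I) and dominate $Q_n^{(m(k-1)+1)}$ (for type~II) by the sum over residue classes, use full divergence at those specific orders to force at least one class to diverge, and then propagate divergence to all classes via the monotonicity sandwich. Your approach is more elementary and more robust: it invokes only the divergence hypothesis at the orders $m$ and $m(k-1)+1$ together with a direct term-by-term comparison, and it bypasses the growth estimate altogether. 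This is a real gain, because the paper's pointwise claim $q_n=o(n^a)$ is not literally true for every fully divergent non-decreasing $Q$ (for instance, set $q_n=2N_j$ on $[N_j,N_{j+1})$ with $N_1=2$ and $N_{j+1}=2^{N_j}$; this $Q$ is non-decreasing and fully divergent, yet $q_{N_j}/N_j=2$ for every $j$), so the paper's argument requires some repair while yours goes through cleanly.
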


\begin{proof}
If $Q$ is fully divergent and non-decreasing then $q_n = o(n^a)$ for every $a > 0$. Then
\begin{align*}
\lim_{n \to \infty} \frac{\sum_{i \equiv r \bmod m}^n \frac{1}{q_i q_{i+1} \cdots q_{i+k-1} }}{\sum_{i \equiv r \bmod m}^n \frac{1}{i^a (i+1)^a \cdots (i+k-1)^a}} \geq 1.
\end{align*}
Taking $a < \frac{1}{k}$ causes the denominator to grow to $\infty$ which implies the numerator does as well. Thus $Q$ is $(m,r)$-divergent of type $I$ for all $m,r$. The same argument applies to sums of the form
$$
\sum_{i \equiv r \bmod m} \frac{1}{q_i q_{i+m} \cdots q_{i+(k-1)m}}
$$
which implies $Q$ is also $(k,m,r)$-divergent of type $II$.
\end{proof}

\begin{proof}[Proof of \reft{main}]
Write $Q = [2]^{\ell_2} [3]^{\ell_3} [4]^{\ell_4} \cdots$.  We may write $Q$ in this form as it is monotone.
For $i \in \mathbb{N}$ define 
\begin{align*}
L_i &= \sum_{j=2}^i \ell_j;\\
\alpha(n) &= \min \{ j  : L_j \geq n \}.
\end{align*}
Since $Q$ is fully divergent, we have $q_n = o(n^a)$ for all $a>0$. Now $n \leq L_{q_n}$ so $q_n = o(L_{q_n}^a)$ which implies $\lim_{b \to \infty} \frac{L_b}{b^a} = \infty$ for all $a > 0$. Define the sequence $t_i = i!$.
We will now define the sequences $(\kappa_i)$ and $(K_i)$. Set $K_0=0$. For $i \in \mathbb{N}$ let $\kappa_i$ be defined to be the smallest integer $j$ satisfying the conditions
\begin{enumerate}
\item $q_n \geq (1+\epsilon) t_{i+1}^2$  for all $n > K_{i-1} + 2j t_i$; \label{cond1}
\item $2 t_{i+1} | K_{i-1} + 2j t_i $; \label{cond2}
\item $q_n < n^{\frac{1}{i+1}}$ for $n > K_{i-1} + 2 j t_i$;
\item $K_{i-1} + 2t_i j > i (2t_{i+1})^{\frac{1}{i+1}}$.
\end{enumerate}
Finally, put 
$$
K_i= \max\{K_{i-1} + 2\kappa_i t_i, t_{i+1}^2\}
$$ 
for positive integers $i$. Define $i(n)$ to be the unique integer such that $K_{i(n)-1} < n \leq K_{i(n)}$ and $j(n)$ the unique integer such that 
$$
K_{i(n)-1} + 2 t_{i(n)} j(n) < n \leq K_{i(n)-1}  + 2t_{i(n)} (j(n)+1).
$$ 
Note that $n > K_{i(n)-1} \geq t_{i(n)}^2$.

Define the basic sequence $P = (p_n)$ as follows
$$
p_n = \Xi(Q,t_{i(n)})_n.
$$
where
$$
\Xi(Q,t)_n = \begin{cases}
\max\{\lfloor c_{t,r}^{-1} q_n \rfloor, 2\} & \text{ if } n \equiv r \bmod 2t \text{ for } 1 \leq r \leq t \\
2^n q_n & \text{ if } n \bmod 2t > t.
\end{cases} 
$$
Since $q_{K_i} > t_i^2$ for all $i$ by  \ref{cond1}, we have
$$1-\frac{1}{t_i} < \frac{\floor{c_{t_i,r}^{-1} q_n}}{c_{t_i,r}^{-1} q_n} < 1$$
for each $K_{i-1} < n \leq K_i$ and $1 \leq r \leq t_i$.
For $i,j \in \mathbb{N}$ define the sums
\begin{align*}
P_{i,j}^{(k)} &= \sum_{v=1}^{2t_i} \frac{1}{p_{K_i + 2t_i j + v} p_{K_i + 2 t_i j + v+1} \cdots p_{K_i + 2 t_i j+ v + k -1}};\\
Q_{i,j}^{(k)} &= \sum_{v=1}^{2t_i} \frac{1}{q_{K_i + 2t_i j+ v} q_{K_i + 2 t_i j+ v+1} \cdots q_{K_i + 2t_i j+ v+ k-1}}
\end{align*}
where $0 \leq j < \kappa_i$. 
Note that
\begin{align*}
\lim_{n \to \infty} &\frac{\pnk}{\pr {\sum_{a=1}^{i(n)-1} \sum_{b=0}^{\kappa_a} P_{a,b}^{(k)}} + \sum_{b=0}^{j(n)-1} P_{i(n),b}^{(k)}} = 1;\\
\lim_{n \to \infty} &\frac{\qnk}{\pr {\sum_{a=1}^{i(n)-1} \sum_{b=0}^{\kappa_a} Q_{a,b}^{(k)}} + \sum_{b=0}^{j(n)-1} Q_{i(n),b}^{(k)}} = 1
\end{align*}
since
$$
\lim_{n \to \infty} \frac{t_{i(n)}}{Q_{n}^{(k)}} \leq \lim_{n \to \infty} \frac{t_{i(n)}}{n / q_n^k} = \lim_{n \to \infty} \frac{q_n^k t_{i(n)}}{n} \leq \lim_{n \to \infty} \frac{q_n^k}{n^{1/3}} \frac{t_{i(n)}}{t_{i(n)}^{4/3}} = 0
$$
with a similar inequality holding for $P_n^{(k)}$.
Define the sets 
\begin{align*}
U_n = \{ (a,b) \in \mathbb{N}^2 : 0 \leq b < \kappa_a, 0\leq a < i(n) \text{ or } a = i(n), 0 \leq b \leq j(n) \\ \text{ and } q_k = q_\ell \text{ for all } K_a +2bt_a + 1 \leq k<\ell \leq K_a+2(b+1)t_a\}.
\end{align*}
Note that 
$$
K_{i(n)-1} + 2 j(n) t_{i(n)} - \# U_n  \leq q_{K_{i(n)-1} + 2 j(n) t_{i(n)}}
$$ 
since there are at most $q_{n} \leq q_{K_{i(n)-1} + 2j(n) t_{i(n)}}$ positions where the basic sequence increases up to position $n$ by monotonocity. Thus we have
\begin{align*}
\lim_{n \to \infty} \frac{K_{i(n)-1} + 2 j(n) t_{i(n)} - \# U_n}{K_{i(n)-1} + 2(j(n)-1) t_{i(n)}} & \leq \lim_{i \to \infty}\max_{1 \leq m \leq \kappa_{i}} \frac{q_{K_{i-1} + 2 m t_i}}{K_{i-1} + 2 (m-1) t_i} \\
&  \leq \lim_{i \to \infty} \max \br{ \frac{q_{K_{i-1} + 2t_i}}{K_{i-1}}, \max_{2 \leq m \leq \kappa_i} \frac{q_{K_{i-1} + 2 m t_i}}{K_{i-1} + 2 (m-1) t_i}} \\
&  \leq \lim_{i \to \infty} \max \br{ \frac{(K_{i-1} + 2t_i)^{1/i}}{K_{i-1}}, \max_{2 \leq m \leq \kappa_i} \frac{(K_{i-1} + 2 m t_i)^{1/i}}{K_{i-1} + 2 (m-1) t_i}} \\
&  \leq \lim_{i \to \infty} \max \br{ \frac{K_{i-1}^{1/i} + 2t_i^{1/i}}{K_{i-1}}, \max_{2 \leq m \leq \kappa_i} \frac{K_{i-1}^{1/i} + (2 m t_i)^{1/i}}{K_{i-1} + 2 (m-1) t_i}} \\
&  \leq \lim_{i \to \infty} \max \br{ K_{i-1}^{1/i-1} + \frac{2t_i^{1/i}}{t_{i}^2}, \max_{2 \leq m \leq \kappa_i} \frac{K_{i-1}^{1/i}}{K_{i-1}} + \frac{(2 m t_i)^{1/i}}{2 (m-1) t_i}} \\
&  \leq \lim_{i \to \infty} K_{i-1}^{1/i-1} + \max \br{  \frac{2t_i^{1/i}}{t_{i}^2},  \frac{2 t_i^{1/i}}{t_i}} \\
& = 0.
\end{align*}
In particular this estimate implies
\begin{align*}
\lim_{n \to \infty} \frac{\sum_{(a,b) \in U_n} P_{a,b}^{(k)} + \sum_{(a,b) \notin U_n} P_{a,b}^{(k)}}{\sum_{(a,b) \in U_n} P_{a,b}^{(k)}} = 1
\end{align*}
and
\begin{align*}
\lim_{n \to \infty} \frac{\sum_{(a,b) \in U_n} Q_{a,b}^{(k)} + \sum_{(a,b) \notin U_n} Q_{a,b}^{(k)}}{\sum_{(a,b) \in U_n} Q_{a,b}^{(k)}} = 1.
\end{align*}
Thus
\begin{align*}
&\lim_{n \to \infty} \frac{P_n^{(k)}}{Q_n^{(k)}} = \lim_{n \to \infty} \frac{\sum_{a=1}^{i(n)-1} \sum_{b=0}^{\kappa_a} P_{a,b}^{(k)} + \sum_{b=0}^{j(n)-1} P_{i(n),b}^{(k)}}{\sum_{a=1}^{i(n)-1} \sum_{b=0}^{\kappa_a} Q_{a,b}^{(k)} + \sum_{b=0}^{j(n)-1} Q_{i(n),b}^{(k)}}  \\
&= \lim_{n \to \infty} \frac{\sum_{(a,b) \in U_n} P_{a,b}^{(k)} +  \sum_{(a,b) \notin U_n} P_{a,b}^{(k)} }{\sum_{(a,b) \in U_n} Q_{a,b}^{(k)} + \sum_{(a,b) \notin U_n} Q_{a,b}^{(k)}} \\
& = \lim_{n \to \infty} \frac{\sum_{a,b \in U_n} P_{a,b}^{(k)}}{\sum_{a,b \in U_n} Q_{a,b}^{(k)}} \\
& \geq \lim_{n \to \infty} \frac{\sum_{a=1}^{i(n)-1} \sum_{b = 1}^{\kappa_a} \pr{ (1-\frac{1}{t_a})^k \frac{\sum_{r=1}^{t_a-k+1} c_{t_a,r} c_{t_a,r+1} \cdots c_{t_a,r+k-1}}{q_{K_{a-1} + 2 b t_a}^k} + \frac{t_a}{2^{K_{a-1} + 2 t_a b} q_{K_{a-1} + 2bt_a}^k } } }{\sum_{a=1}^{i(n)-1} \sum_{b=1}^{\kappa_a} \frac{2 t_a}{q_{K_{a-1} + 2 b t_a}^k} + \sum_{b=0}^{j(n)-1} \frac{2 t_{i(n)}}{q_{K_{i(n)-1} + 2 b t_{i(n)}}^k} }\\
& \ \ \ \ \ + \frac{\sum_{b=0}^{j(n) - 1}  \pr{ (1-\frac{1}{t_{i(n)}})^k \frac{\sum_{r=1}^{t_{i(n)}-k+1} c_{t_{i(n)},r} c_{t_{i(n)},r+1} \cdots c_{t_{i(n)},r+k-1}}{q_{K_{i(n)-1} + 2 b t_{i(n)}}^k} + \frac{t_{i(n)}}{2^{K_{i(n)-1} + 2bt_{i(n)}} q_{K_{i(n)-1} + 2bt_{i(n)}}^k } }}{\sum_{a=1}^{i(n)-1} \sum_{b=1}^{\kappa_a} \frac{2 t_a}{q_{K_{a-1} + 2 b t_a}^k} + \sum_{b=0}^{j(n)-1} \frac{2 t_{i(n)}}{q_{K_{i(n)-1} + 2 b t_{i(n)}}^k}} \\
&= \lim_{t \to \infty} \frac{\sum_{r=1}^{t-k+1} c_{t,r} c_{t,r+1} \cdots c_{t,r+k-1} }{2t}.
\end{align*}
Thus
$$
\lim_{n \to \infty} \frac{\pnk}{\qnk} = 1
$$
if and only if $k \in S$. 

Now fix $m \in \mathbb{N}$ and $0 \leq r \leq m-1$. Since $m | K_i$ and $m | t_i$ for sufficiently large $i$ we have that for all $a,a'$ large enough and $0 \leq b < \kappa_a$, $0 \leq b' < \kappa_{a'}$ we have $K_{a-1} + 2 b t_a \equiv K_{a'-1} + 2b' t_{a'} \bmod m$. Then by similar calculations and \refl{freq} we have
\begin{align*}
\lim_{n \to \infty} \frac{P_n^{(m,r)}}{Q_{n}^{(m,r)}} &\neq 1 \\
\end{align*}
for all $k$. Similarly by \refl{freq} we have
\begin{align*}
\lim_{n \to \infty} \frac{\sum_{i \equiv r \bmod m}^n \frac{1}{p_i p_{i+m} \cdots p_{i+(k-1)m}}}{\sum_{i \equiv r \bmod m}^n \frac{1}{q_i q_{i+m} \cdots q_{i+(k-1)m}}} \neq 1
\end{align*}
for some $k \geq 2$. Note that this sequence is bounded and bounded away from $0$. Now $Q$ is monotone and fully divergent so by \refl{fulldivAPdiv} it is also $(m,r)$-divergent of type $I$ and $(k,m,r)$-divergent of type $II$ for all $k,m,r$, thus $P$ is as well. Therefore $\N{P} \cap \NAPI{P} \cap \NAPII{P}$ is non-empty.
Since 
\begin{align*}
N_n^Q(B,\Psi_{P,Q}(x)) &= N_n^P(B,x) + O(1), \\
N_{n,m,r}^Q(B,\Psi_{P,Q}(x)) &= N_{n,m,r}^P(B,x) + O(1), \hbox {and}\\ 
N_{n,m,r}^Q\prime(B,\Psi_{P,Q}(x)) &= N_{n,m,r}^P \prime(B,x) + O(1)
\end{align*}
for all $x \in \N{P} \cap \NAPI{P} \cap \NAPII{P}$ then 
$$
\Psi_{P,Q}(x) \in \RNAPI{Q} \cap \RNAPII{Q} \cap \NAPAbI{Q} \cap  \bigcap_{k \in S} \Nk{Q}{k} \setminus\pr{ \NAPII{Q} \cup \bigcup_{k \notin S} \Nk{Q}{k} }.
$$
\end{proof}

We will need the following theorem of D. Feng, Z. Wen, and J. Wu from \cite{FengWenWu} to prove \reft{HDmain}.

 Let $( n_k )$ be a sequence of positive integers and $(c_k)$ be a sequence of positive numbers such that $n_k \geq 2$, $0<c_k<1$, $n_1 c_1 \leq \delta$, and $n_k c_k \leq 1$, where $\delta$ is a positive real number. For any $k$, let $D_k = \{ (i_1, \cdots, i_k): 1\leq i_j \leq n_j, 1\leq j \leq k \}$, and $D = \bigcup D_k$, where $D_0 =\emptyset$. If $\sigma = ( \sigma_1, \cdots , \sigma_k) \in D_k$, $\tau = (\tau_1 ,\cdots , \tau_m) \in D_m$, put $\sigma * \tau = (\sigma_1, \cdots , \sigma_k, \tau_1, \cdots , \tau_m)$.

\begin{definition}
Suppose $J$ is a closed interval of length $\delta$. The collection of closed subintervals $ \mathcal{F} = \{ J_\sigma : \sigma \in D\}$ of $J$ has \textit{homogeneous Moran structure} if:
\begin{enumerate}
	\item $J_{\emptyset} = J$;
	\item $\forall k \geq 0, \sigma \in D_k, J_{\sigma *1}, \cdots , J_{\sigma * n_{k+1}}$ are subintervals of $J_\sigma$ and $\mathring{J}_{\sigma*i}\cap \mathring{J}_{\sigma*j}=\emptyset$ for $i \neq j$;
	\item $\forall k \geq 1, \forall \sigma \in D_{k-1}, 1\leq j \leq n_k$, $c_k = \frac{\lambda(J_{\sigma*j})}{\lambda(J_\sigma)}$.
\end{enumerate}
\end{definition}

Suppose that $\mathcal{F}$ is a collection of closed subintervals of $J$ having homogeneous Moran structure. Let $E(\mathcal{F}) = \bigcap_{k\geq 1} \bigcup_{\sigma \in D_k} J_\sigma$. We say $E(\mathcal{F})$ is a \textit{homogeneous Moran set determined by} $\mathcal{F}$, or it is a \textit{homogeneous Moran set determined by} $J$, $( n_k )$, $( c_k )$. 

\begin{thrm}\labt{moran}[D. Feng, Z. Wen, and J. Wu]
If $S$ is a homogeneous Moran set determined by $J$, $(n_k )$, $( c_k )$, then
\begin{equation*}
\liminf_{k \to \infty} \frac{\log n_1 n_2 \cdots n_k}{-\log c_1 c_2 \cdots c_{k+1} n_{k+1}} \leq \dimh{S} \leq \liminf_{k \to \infty} \frac{\log n_1 n_2 \cdots n_k}{-\log c_1 c_2 \cdots c_k}.
\end{equation*}
\end{thrm}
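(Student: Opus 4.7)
The plan is to prove the upper and lower bounds separately: the upper by exhibiting the natural cover of $S = E(\mathcal{F})$ by level-$k$ cylinders, and the lower by placing a uniform Moran measure on $S$ and invoking the mass distribution principle. Write $N_k := n_1 n_2 \cdots n_k$ and $C_k := c_1 c_2 \cdots c_k$, so that each level-$k$ cylinder $J_\sigma$, $\sigma \in D_k$, has length $\delta C_k$.

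For the upper bound, the collection $\{J_\sigma : \sigma \in D_k\}$ is a cover of $S$ by $N_k$ intervals of diameter $\delta C_k$, hence $\mathcal{H}^s_{\delta C_k}(S) \leq N_k(\delta C_k)^s$. If $s$ strictly exceeds $\liminf_k \log N_k/(-\log C_k)$, then for infinitely many $k$ one has $N_k C_k^s < 1$, giving $\mathcal{H}^s_{\delta C_k}(S) \leq \delta^s$ along this subsequence and hence $\mathcal{H}^s(S) \leq \delta^s < \infty$, so $\dimh{S} \leq s$. Letting $s$ decrease to the liminf completes the upper bound.

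For the lower bound, define the uniform Moran measure $\mu$ on $S$ by assigning mass $1/N_k$ to each level-$k$ cylinder; consistency at each level (since the $n_{k+1}$ children of $J_\sigma$ have total mass $n_{k+1}/N_{k+1} = 1/N_k$) produces a Borel probability measure via Kolmogorov's extension theorem. The crucial estimate is that for $x \in S$ and small $r > 0$, if $k = k(r)$ is chosen so that $\delta C_{k+1} n_{k+1} \leq r < \delta C_k$, then $B(x,r)$ meets at most two adjacent level-$k$ cylinders, and within each it intersects at most $\lceil r/(\delta C_{k+1}) \rceil + 1$ of the $n_{k+1}$ interior-disjoint children of length $\delta C_{k+1}$. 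This yields
\[
\mu(B(x,r)) \;\leq\; \mathrm{const} \cdot \frac{r}{\delta C_{k+1} n_{k+1}} \cdot \frac{1}{N_k}.
\]
Whenever $s < \log N_k/(-\log C_{k+1} n_{k+1})$ one has $N_k (C_{k+1} n_{k+1})^s > 1$, so the right-hand side is bounded by a constant times $r^s$; the mass distribution principle then forces $\dimh{S} \geq s$, and taking $s$ up to the liminf finishes the lower bound.

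The main obstacle, typical for Moran constructions, is this ball estimate in the lower bound: the definition of a homogeneous Moran structure only guarantees interior-disjointness of the children of each cylinder, with no uniform spacing assumed, so one must check that even the worst-case packing (all $n_{k+1}$ children pushed to one end of $J_\sigma$) still admits the $O(r/(\delta C_{k+1}))$ bound on the number of children intersected. One must also handle the boundary regimes uniformly -- $r$ above $\delta C_k$, where the ball spans several level-$k$ cylinders, and $r$ below $\delta C_{k+1}$, where finer structure becomes relevant -- so that the mass distribution argument yields precisely the denominator $-\log(c_1 \cdots c_{k+1} n_{k+1})$ appearing in the stated liminf.
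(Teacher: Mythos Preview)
The paper does not supply a proof of this statement; it is quoted from Feng, Wen, and Wu \cite{FengWenWu} and invoked only as a black box in the proof of \reft{HDmain}. Your outline --- natural level-$k$ covers for the upper bound, the uniform Moran measure together with the mass distribution principle for the lower bound --- is the standard argument and is essentially how the cited reference proceeds, so there is nothing in the present paper to compare against.

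One minor point on your lower bound: the ranges $[\delta C_{k+1}n_{k+1},\,\delta C_k)$ do not cover all small $r$, since $n_{k+1}\ge 2$ leaves gaps $[\delta C_{k+1},\,\delta C_{k+1}n_{k+1})$. On the main range the crude bound $\mu(B(x,r))\le 3/N_k$ (at most three level-$k$ cylinders are met) combined with $r\ge \delta C_{k+1}n_{k+1}$ already yields
\[
\frac{\mu(B(x,r))}{r^s}\;\le\;\frac{3\,\delta^{-s}}{N_k\,(C_{k+1}n_{k+1})^s},
\]
which is exactly where your inequality $N_k(C_{k+1}n_{k+1})^s>1$ finishes the job; your refined child-count is what is actually needed on the gap range, where substituting $r<\delta C_{k+1}n_{k+1}$ into $r^{1-s}$ produces the same final bound. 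So the two estimates together close the argument --- just not quite in the single-case packaging you wrote down.
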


\begin{proof}[Proof of \reft{HDmain}]
Let $P = (p_n) = [2]^{\ell_2} [3]^{\ell_3} \cdots$ be infinite in limit, fully divergent, and monotone. By \reft{main} there is an
 $$
\xi \in  \RNAPI{Q} \cap \RNAPII{Q} \cap \NAPAbI{Q} \cap  \bigcap_{k \in S} \Nk{Q}{k} \setminus\pr{ \NAPII{Q} \cup \bigcup_{k \notin S} \Nk{Q}{k} }.
$$ 
Write $\xi = F_0. F_1 F_2 \cdots \wrt{P}$. Now define
\begin{align*}
Q &= [2]^{\ell_2} [2^{2^2} \cdot 2]^{2^2 \ell_2} [3]^{\ell_3} [2^{2^3} \cdot 3]^{2^3 \ell_3} [4]^{\ell_4} [2^{2^4} \cdot 4]^{2^4 \ell_4} \cdots [n]^{\ell_n} [2^{2^n} \cdot n]^{2^n \ell_n} \cdots;\\
L_i &= \sum_{j=2}^{i+1} \ell_j;\\
M_i &= \sum_{j=2}^{i+1} (2^j+1)\ell_j;\\
\alpha(n) &= \min \{j : M_j < n\};\\
\epsilon_n&= \frac{\min\{\log q_1 \cdots q_{n-1}, \log q_n\}^{1/2}}{\log q_n};\\
\omega_n &= q_n^{1-\epsilon_n}.
\end{align*}
Now define the sets
\begin{align*}
V_n &= \begin{cases}
\{F_{L_{\alpha(n)} + n-M_{\alpha(n)}}\} &\text{ if } n - M_{\alpha(n)} \leq \ell_{\alpha(n)} \\
\left [ \max\{\floor{q_n x_n }, \log \alpha(n) \} - \omega_n,  \max\{\floor{q_n x_n }, \log \alpha(n) \} + \omega_n\right ] &\text{ otherwise}
\end{cases};\\
\Omega &= \{ y = .E_1 E_2 \cdots \wrt{Q} : E_n \in V_n \text{ for all } n\}
\end{align*} 
and let $y = .E_1 E_2 \cdots \wrt{Q} \in \Omega$.
Note that $\lim_{n \to \infty} \frac{E_n}{q_n} - \frac{\floor{q_n x_n}}{q_n} = 0$ since $\frac{\log \alpha(n)}{q_n} \to 0$ and $\frac{\omega_n}{q_n} \to 0$. This implies $G((T_{Q,n}(x) - x_n)_n)$ consists of the single function $1- \one_{\{0\}}$ since the difference between the two sequences tends to $0$ except along a set of density $0$. Fix a block $B$ of length $k$ and let $m$ be the maximum digit in $B$. Because $\alpha(n) \to \infty$ we must have that $m < \alpha(n)$ for $n$ sufficiently large, so if we define 
$$
g(n) =  L_{\alpha(n)} + \min\{n-M_{\alpha(n)}, \ell_{\alpha(n)}\}
$$ 
we have
$$
N_n^Q(B,y) = N_{g(n)}^P(B,x) + O(1).
$$
On the other hand, we have
\begin{align*}
\lim_{n \to \infty} \frac{P_{g(n)}^{(k)}}{Q_n^{(k)}} &= \lim_{n \to \infty} \frac{\sum_{i=1}^{g(n)} \frac{1}{p_i \cdots p_{i+k-1}}}{\sum_{i=1}^n \frac{1}{q_i \cdots q_{i+k-1}}}  \\
& = \lim_{m \to \infty}\frac{\sum_{i=1}^{m} \frac{\ell_i}{i^k}}{\sum_{i=1}^m \pr{\frac{\ell_i}{i^k} + \frac{(2^i+1) \ell_i}{2^{k 2^i} i^k}}} = 1.\\
\end{align*}
Similar estimates holds for the corresponding quantities for $Q$ AP-normality of types $I$ and $II$.

Thus $y \in \mathscr{N}_k(Q)$ if and only if $\xi \in \Nk{P}{k}$. Furthermore $y \in \NAPAbI{Q} \setminus  \NAPII{Q}$. This implies 
$$
\Omega \subset \DistQ \cap \RNAPI{Q} \cap \RNAPII{Q} \cap \NAPAbI{Q} \cap \bigcap_{k \in S} \Nk{Q}{k} \setminus \pr{\bigcup_{k \notin S} \Nk{Q}{k} \cup \NAPII{Q}}.
$$ 
Furthermore, by \reft{moran}
\begin{align*}
\dim_H(\Omega)& \geq \lim_{n \to \infty} \frac{\log \omega_1 \cdots \omega_n}{-\log \frac{1}{q_1} \cdots \frac{1}{q_{n}}\omega_{n+1}} \\
& = \lim_{n \to \infty} \frac{\sum_{i=1}^n (1-\epsilon_i) \log q_i}{\sum_{i=1}^{n+1} \log q_i - (1-\epsilon_{n+1}) \log q_{n+1}} \\
& = \lim_{n \to \infty} \frac{1}{1+ \epsilon_{n+1} \frac{\log q_{n+1}}{\log q_1 \cdots q_n}} = 1.
\end{align*}
Thus
$$
\dim_H\left (\DistQ \cap \RNAPI{Q} \cap \RNAPII{Q} \cap \NAPAbI{Q} \cap \bigcap_{k \in S} \Nk{Q}{k} \setminus \pr{\bigcup_{k \notin S} \Nk{Q}{k} \cup \NAPII{Q}}\right ) = 1.
$$
\end{proof}

\begin{proof}[Proof of \reft{comp}]
Let $\epsilon$ be a sufficiently small rational. For $i<6$ put $\ell_i = 0$ and for $i \geq 6$ put $\ell_{i} = 3^{i!} \cdot (i+1)^{i! i}$. Then $P$ is a computable basic sequence. Following the proof of \reft{main}, note that the sequences $(L_n)$, $(\alpha(n))$, $(t_i)$, $(\kappa_i)$, $(K_i)$, $(i(n))$, $(j(n))$, and $(\Xi(P,t_{i(n)})_n)$ are computable. Thus $Q = (q_n) = (\Xi(P,t_{i(n)}))_n$ is a computable basic sequence. By Theorem 1.8 in \cite{ppq2} there is a real number $x = 0.E_1 E_2 \wrt{P}$ in $\N{P} \cap \NAPI{P} \cap \NAPII{P}$ with $(E_n)$ a computable sequence. Thus $\Psi_{P,Q}(x)$ is a computable real number since its sequence of digits is $\min\{E_n,q_n-1\}$ which is computable. Therefore we have a computable real number in 
$$
\RNAPI{Q} \cap \RNAPII{Q} \cap \NAPAbI{Q} \cap \bigcap_{k \in S} \Nk{Q}{k} \setminus \pr{\NAPII{Q} \cup \bigcup_{k \notin S} \Nk{Q}{k}}.
$$
\end{proof}

\bibliographystyle{amsplain}

\providecommand{\bysame}{\leavevmode\hbox to3em{\hrulefill}\thinspace}
\providecommand{\MR}{\relax\ifhmode\unskip\space\fi MR }
\providecommand{\MRhref}[2]{%
  \href{http://www.ams.org/mathscinet-getitem?mr=#1}{#2}
}
\providecommand{\href}[2]{#2}

\end{document}